\newcommand{\bg}{{\overline{g}}}
\newcommand{\bH}{{\overline{H}}}
\newcommand{\vp}{\varphi}
\newcommand{\ld}{\ldots}
\newcommand{\sgn}{\mathrm{sgn}}
\DeclareMathOperator*{\ot}{\otimes}%allows placement of subscript below in displaymath
\DeclareMathOperator*{\op}{\oplus}%allows placement of subscript below in displaymath
\newcommand{\beq}{\begin{equation}}
\newcommand{\eeq}{\end{equation}}
\newcommand{\beas}{\begin{eqnarray*}}
\newcommand{\eeas}{\end{eqnarray*}}
\newcommand{\id}{\mathrm{id}}%identity map
\newcommand{\cD}{\mathcal{D}}
\newcommand{\NN}{\mathbb{N}}
\newcommand{\ZZ}{\mathbb{Z}}
\newcommand{\CC}{\mathbb{C}}
\newcommand{\FF}{\mathbb{F}}
\newcommand{\chr}[1]{\mathrm{char}\,#1}
\newcommand{\hf}{H_{\mathrm{fin}}}
\DeclareMathOperator{\End}{\mathrm{End}}
\newcommand{\ad}{\mathrm{ad}}
\newcommand{\ch}{\mathrm{char}}
\newcommand{\Span}{\mathrm{Span}}
\numberwithin{equation}{section}
\newtheorem{theorem}[equation]{Theorem}
\newtheorem{lemma}[equation]{Lemma}
\newtheorem{conjecture}[equation]{Conjecture}
\newtheorem{corollary}[equation]{Corollary}
\newtheorem{proposition}[equation]{Proposition}
\newtheorem{definition}[equation]{Definition}
\theoremstyle{definition}
\newtheorem{example}[equation]{Example}
\newcommand{\gr}{\mathrm{gr}}
\begin{document}

\title{Delta sets and polynomial identities in pointed Hopf algebras}

\author[Bahturin]{Yuri Bahturin}
\address{Department of Mathematics and Statistics, Memorial
University of Newfoundland, St. John's, NL, A1C5S7, Canada}
\email{bahturin@mun.ca}

\author[Witherspoon]{Sarah Witherspoon}
\address{Department of Mathematics, Texas A\&M University, College
Station, Texas 77843, USA}
\email{sjw@math.tamu.edu}

\thanks{{\em Keywords:} Hopf algebras,  algebras with polynomial identities, delta sets}
\thanks{{\em 2010 Mathematics Subject Classification:} Primary 16T05, Secondary 16W50, 17B37.}
\thanks{The first author acknowledges a partial support by NSERC Discovery Grant 2019-05695. The second author acknowledges partial support by NSF grants DMS-1665286
and DMS-2001163.}

\begin{abstract} We survey a vast array of known results and techniques in the area of polynomial identities in pointed Hopf algebras. Some new results are proven in the setting of  Hopf algebras that appeared in papers of D.~Radford and N.~Andruskiewitsch - H.-J.~Schneider.

\end{abstract}

\maketitle

\section{Introduction}\label{s1}

In this paper our main concern is the determination of conditions under which a pointed Hopf algebra $H$ over a field $\FF$ is a PI-algebra, that is $H$ satisfies a nontrivial polynomial identity $f(x_1,\ld,x_n)=0 $. Here $f(x_1,\ld,x_n)$ is a nonzero element of the free associative algebra (i.e.~algebra of noncommutative polynomials) in the variables $x_1,\ld,x_n$. For an excellent modern source of information about PI-algebras we refer the reader to Giambruno - Zaicev's monograph \cite{GZ}. As a crash course, here we recall a few facts about PI-algebras that are relevant to our main problem, particularly some of the many results known about group algebras, universal enveloping algebras, and smash products. 

Our main new results are:
(1)  A generalization in Theorem~\ref{tLSABW}, to color Lie superalgebras, of Kochetov's classification of Lie superalgebras and their smash products satisfying polynomial identities.
(2) A classification in Theorem~\ref{A-S-PI} of a large class of pointed Hopf algebras, arising in work of Andruskiewitsch - Schneider, satisfying polynomial identities. 
Along the way we also discuss delta sets in pointed Hopf algebras. 
They play a crucial role in PI theory and also prove useful in other situations.

We begin in Sections~\ref{ssPI} and~\ref{ssCCHA} by recalling some definitions and summarizing some of the general theory we will need as well as some known results about cocommutative Hopf algebras, particularly group algebras, universal enveloping algebras of Lie algebras, and their smash products. In Section~\ref{ssNCCHA} we state our generalization of Kochetov's work to color Lie superalgebras: In Theorem~\ref{tLSABW} we give necessary and sufficient conditions for $U(L)\# \FF G$ to be a PI-algebra, where $U(L)$ is the universal enveloping algebra of a color Lie superalgebra $L$, $G$ is an acting group, and $\FF$ has characteristic~0. In the case of positive characteristic, these conditions are not known. The proof of Theorem~\ref{tLSABW} is deferred to Section~\ref{sPT}; we show there that Kochetov's proof generalizes with no obstacles. 

To prepare for the proof of Theorem~\ref{tLSABW} as well as to set the stage for understanding a further large class of pointed Hopf algebras introduced by Andruskiewitsch and Schneider, in Section~\ref{sDS} we collect some more specialized known techniques for group algebras, Lie superalgebras, and smash products. We recall delta sets and known results for groups, Lie algebras, and Hopf algebras, and adapt them to color Lie superalgebras. 

Radford's Hopf algebras $F_{(q)}$ feature in Section~\ref{sFQ}; these are Hopf algebras generated by one grouplike and one skew-primitive element depending on a scalar $q$. They play a key role as subalgebras in more general pointed Hopf algebras, and our work in Section~\ref{sFQ} is accordingly called upon later. We show that $F_{(q)}$ is a PI-algebra if and only if $q$ is a root of unity, and we give further results on the degree of the corresponding polynomial identity as well as some results on delta sets for $F_{(q)}$.

In Section~\ref{pointed} we define Hopf algebras $U(\cD,\lambda)$ depending on some data $\cD,\lambda$; these are the pointed Hopf algebras arising in Andruskiewitsch and Schneider's classification of finite dimensional pointed Hopf algebras, but with more general groups of grouplikes allowed. In Section~\ref{ssIR} we give in Theorem~\ref{A-S-PI} necessary and sufficient conditions for $U(\cD,\lambda)$ to be a PI-algebra and study delta sets.

Throughout, $\FF$ will be a field, of arbitrary characteristic unless stated otherwise.

\section{PI-algebras}\label{ssPI}
Note that every finite-dimensional algebra $R$ over a field $\FF$ is PI: if $\dim_{\FF}\! R=n-1$, then $R$ satisfies the so called \textit{standard identity}:
\begin{equation}\label{eSt}
s_n(x_1,\ld,x_n)=\sum_{\sigma\in\mathrm{Sym}(n)}\sgn(\sigma)x_{\sigma(1)}\cdots x_{\sigma(n)}=0.
\end{equation}
At the other extreme, commutative algebras of any dimension satisfy $s_2(x,y)=xy-yx=[x,y]=0$. Generally, for any PI-algebra $R$ there exist numbers $m$ and $n$ such that $s_n(x_1,\ld,x_n)^m=0$ is an identity in $R$. 

Subalgebras, factor-algebras and extensions (including direct products) of PI-algebras are PI. An important theorem of A. Regev \cite{AR} says the following.
\begin{theorem}\label{tAR}
If $A$ and $B$ are PI-algebras over an arbitrary field $\FF$, then $A\ot_\FF B$ is a PI-algebra.
\end{theorem}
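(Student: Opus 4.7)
The plan is to follow Regev's original codimension argument. For any $\FF$-algebra $R$, let $P_n$ be the $n!$-dimensional space of multilinear polynomials of degree $n$ in $x_1,\ldots,x_n$, and set $c_n(R) = \dim\bigl(P_n/(P_n \cap T(R))\bigr)$, where $T(R)$ is the T-ideal of polynomial identities of $R$. A standard multilinearization argument shows that $R$ is PI if and only if $c_n(R) < n!$ for some $n$, so it suffices to bound $c_n(A \otimes_\FF B)$ by a quantity growing strictly more slowly than $n!$.

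The first and hardest ingredient is Regev's exponential bound: if $R$ satisfies a nontrivial identity of degree $d$, then $c_n(R) \leq (d-1)^{2n}$. View $P_n$ as a left $\mathrm{Sym}(n)$-module via permutation of variables and decompose it into Specht modules $S^\lambda$ indexed by partitions $\lambda \vdash n$. Using Young symmetrizers one shows that the $S^\lambda$-isotypic component of $P_n(R) := P_n/(P_n \cap T(R))$ vanishes whenever the diagram of $\lambda$ contains a $d\times d$ square (i.e.\ $\lambda_d \geq d$), because alternation over $d$ columns of length $d$ combined with the hypothesis on $R$ forces the component to lie in the identities. The surviving partitions fit inside the hook $\{\lambda \vdash n : \lambda_d \leq d-1\}$, and the hook length formula summed over this set yields the advertised exponential bound.

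The second ingredient is Regev's tensor inequality $c_n(A\otimes B) \leq c_n(A)\, c_n(B)$. For $f = \sum_{\sigma \in \mathrm{Sym}(n)}\alpha_\sigma\, x_{\sigma(1)}\cdots x_{\sigma(n)} \in P_n$, substituting $x_k = a_k \otimes b_k$ yields
\[ f(a_1\otimes b_1,\ldots,a_n\otimes b_n) \;=\; \sum_\sigma \alpha_\sigma\,(a_{\sigma(1)}\cdots a_{\sigma(n)})\otimes (b_{\sigma(1)}\cdots b_{\sigma(n)}). \]
This evaluation depends on $f$ only through its image under the linear map $\Delta : P_n \to P_n(A)\otimes_\FF P_n(B)$ sending each monomial $x_{\sigma(1)}\cdots x_{\sigma(n)}$ to $\overline{x_{\sigma(1)}\cdots x_{\sigma(n)}}\otimes \overline{x_{\sigma(1)}\cdots x_{\sigma(n)}}$. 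Since simple tensors span $A\otimes B$ and $f$ is multilinear, the kernel of $\Delta$ lies in $T(A\otimes B) \cap P_n$, whence $c_n(A\otimes B) \leq \dim\,\mathrm{image}(\Delta) \leq c_n(A)\, c_n(B)$.

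Combining the two ingredients gives $c_n(A\otimes B) \leq \bigl((d_A-1)(d_B-1)\bigr)^{2n}$, a fixed exponential in $n$, whereas $n!$ grows faster than any exponential. Hence $c_n(A\otimes B) < n!$ for all sufficiently large $n$, so $A\otimes B$ satisfies a nontrivial multilinear identity. The principal obstacle is the first ingredient: Regev's exponential codimension bound rests on nontrivial representation theory of $\mathrm{Sym}(n)$, whereas the tensor submultiplicativity and the final comparison with $n!$ are essentially formal.
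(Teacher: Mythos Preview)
The paper does not prove this theorem; it merely quotes it as Regev's result. Your overall strategy---bound the codimension sequence $c_n(R)$ exponentially when $R$ is PI, prove the submultiplicativity $c_n(A\otimes B)\le c_n(A)\,c_n(B)$, and compare with $n!$---is exactly Regev's, and your argument for the tensor inequality via the diagonal map $\Delta:P_n\to P_n(A)\otimes P_n(B)$ is correct.

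Your sketch of the exponential bound, however, is not Regev's proof and is imprecise as written. Regev's 1972 argument is purely combinatorial: starting from a multilinear identity of degree $d$, an inductive straightening shows that $P_n(R)$ is spanned by the monomials $x_{\sigma(1)}\cdots x_{\sigma(n)}$ for which $\sigma$ is ``$d$-good'' (a condition on descending subsequences), and one then counts such permutations directly to obtain $(d-1)^{2n}$. The representation-theoretic statement you give---that the $S^\lambda$-component of $P_n(R)$ vanishes whenever $\lambda$ contains a $d\times d$ square---does not follow immediately from an arbitrary identity of degree $d$; the later cocharacter results of Amitsur and Regev either assume a Capelli identity or require additional work, and are phrased in terms of hook shapes $H(p,q)$ rather than square avoidance. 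This does not invalidate your overall scheme, since the exponential bound is itself a theorem one may simply cite, but the justification you offer for it should be replaced either by Regev's combinatorial argument or by a correct invocation of the later representation-theoretic refinements.
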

In particular, if $n$ is a natural number and $S$ is a PI-algebra then $M_n(S)=M_n(\FF)\ot_{\FF} S$ is a PI-algebra. One of the frequent applications of this is to extensions of the field of coefficients to other fields or even to commutative algebras $S$ over $\FF$. In all these cases the algebra obtained by an extension of coefficients from a PI-algebra remains PI.

Another corollary of the techniques of Regev's Theorem, which is useful when dealing with the quantum commutator structure of an algebra, is the following (see \cite[Proposition 4.1.11]{BMPZ}). In what follows, $[a,b]_q=ab-qba$.
\begin{proposition}\label{pq_id}
Suppose that an associative algebra $A$ over a field $\FF$ satisfies a nontrivial polynomial identity of degree $d$. 
Then for any $q\in\FF$, $A$ satisfies a nontrivial identity of the form
\begin{equation}\label{eq_id}
f^q(x_1,\ld,x_n,y_1,\ld,y_n)=\sum_{\sigma\in\mathrm{Sym}(n)}\lambda_\sigma[x_1,y_{\sigma(1)}]_q\cdots[x_n,y_{\sigma(n)}]_q=0,
\end{equation}
where $n=3d^4$ and  
$\lambda_\sigma\in\FF$.
\end{proposition}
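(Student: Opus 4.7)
The plan is to combine the quantitative form of Regev's theorem (Theorem~\ref{tAR}) with a dimension count in the multilinear part of the free algebra. Let $V_{2n}$ denote the space of multilinear polynomials in the variables $x_1,\ldots,x_n,y_1,\ldots,y_n$ inside the free associative algebra. The standard quantitative refinement underlying Regev's tensor-product theorem is the codimension bound
\[
c_m(A) \;:=\; \dim_{\FF}\!\bigl(V_m / (V_m\cap\mathrm{Id}(A))\bigr) \;\leq\; (d-1)^{2m},
\]
valid whenever $A$ satisfies a nontrivial identity of degree $d$. Applying this with $m=2n$ gives $c_{2n}(A)\leq (d-1)^{4n}$.

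For each $\sigma\in\mathrm{Sym}(n)$ introduce
\[
P_\sigma \;:=\; [x_1,y_{\sigma(1)}]_q \cdots [x_n,y_{\sigma(n)}]_q \;\in\; V_{2n}.
\]
Expanding each factor as $x_i y_{\sigma(i)} - q\, y_{\sigma(i)} x_i$, every monomial in $P_\sigma$ has the shape $w_1 w_2 \cdots w_n$, where the $i$-th block $w_i$ is either $x_i y_{\sigma(i)}$ or $y_{\sigma(i)} x_i$. In particular, the ``fully-forward'' monomial $m_\sigma := x_1 y_{\sigma(1)}\, x_2 y_{\sigma(2)} \cdots x_n y_{\sigma(n)}$ appears in $P_\sigma$ with coefficient~$1$, and it appears in no $P_\tau$ with $\tau \neq \sigma$ (the positions of the $y$'s within $m_\sigma$ uniquely recover $\sigma$). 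Hence the $n!$ polynomials $\{P_\sigma\}_{\sigma\in\mathrm{Sym}(n)}$ are $\FF$-linearly independent in $V_{2n}$, and in any linear combination $\sum_\sigma \lambda_\sigma P_\sigma$ the coefficient of $m_\sigma$ equals $\lambda_\sigma$.

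Now take $n=3d^4$. Stirling's inequality $n! \geq (n/e)^n$, combined with $n/e = 3d^4/e > (d-1)^4$ (which holds since $3 > e$ and $d \geq d-1$), yields $n! > (d-1)^{4n} \geq c_{2n}(A)$. Consequently the images of the $n!$ elements $P_\sigma$ in the quotient $V_{2n}/(V_{2n}\cap\mathrm{Id}(A))$ must be linearly dependent, so there exist scalars $\lambda_\sigma\in\FF$, not all zero, with $\sum_\sigma \lambda_\sigma P_\sigma \in \mathrm{Id}(A)$. By the previous paragraph $f^q := \sum_\sigma \lambda_\sigma P_\sigma$ is nonzero in the free algebra (for any $\sigma_0$ with $\lambda_{\sigma_0}\neq 0$ the monomial $m_{\sigma_0}$ occurs in $f^q$ with coefficient $\lambda_{\sigma_0}$), so $f^q$ is a nontrivial identity of $A$ of the required form.

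The principal obstacle, if one does not simply cite Regev, is the first step --- the codimension bound $c_m(A)\leq (d-1)^{2m}$. Its standard proof decomposes $V_m$ as a $\mathrm{Sym}(m)$-module and uses a hook-shape argument to show that only Young diagrams fitting into the hook $H(d-1,d-1)$ contribute to $V_m/(V_m\cap\mathrm{Id}(A))$. Granting that ingredient, the remainder is pure pigeonhole plus Stirling, and the specific constant $3$ appearing in $n=3d^4$ arises from the estimate $e<3$.
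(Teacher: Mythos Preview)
Your argument is correct and is precisely the ``Regev technique'' the paper alludes to when citing \cite[Proposition~4.1.11]{BMPZ}: bound the $2n$-th codimension by $(d-1)^{4n}$, observe that the $n!$ products $P_\sigma$ are linearly independent in the free algebra via the distinguished monomials $m_\sigma$, and force a dependence modulo $\mathrm{Id}(A)$ once $n!>(d-1)^{4n}$, which holds at $n=3d^4$ by Stirling. One small remark on your closing paragraph: Regev's original proof of the bound $c_m(A)\le(d-1)^{2m}$ is an elementary counting argument on ``$d$-good'' permutations and does not require the Amitsur--Regev hook theorem; the hook machinery gives sharper asymptotics but is not needed here.
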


The following theorem covers PI-algebras that are also a Hopf algebras.
Recall the adjoint action of $H$ on itself:  $(\ad x)(y) = \sum x_1 y S(x_2)$ where $S$ is the antipode and comultiplication takes $x$ to $\sum x_1\ot x_2$.
\begin{theorem}\label{tH_inner_id}
Suppose that a Hopf algebra $H$ over a field $\FF$ satisfies a nontrivial polynomial identity of degree $d$. 
Then $H$ satisfies a nontrivial identity of the form
\begin{equation}\label{eH_inner_id}
f(x_1,\ld,x_n,y_1,\ld,y_n)=\sum_{\sigma\in\mathrm{Sym}(n)}\lambda_\sigma(\ad\,x_1)(y_{\sigma(1)})\cdots(\ad\,x_n)(y_{\sigma(n)})=0,
\end{equation}
where $n=3d^4$ and  
$\lambda_\sigma\in\FF$.
\end{theorem}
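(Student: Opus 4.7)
The plan is to adapt the Regev-style argument behind Proposition~\ref{pq_id} to the Hopf setting, replacing the role played there by the $q$-commutator with the adjoint action. The key enabling identity is
\begin{equation*}
xy = \sum (\ad\,x_{(1)})(y)\,x_{(2)},
\end{equation*}
valid in any Hopf algebra, where $\Delta(x) = \sum x_{(1)}\ot x_{(2)}$ is written in Sweedler notation. This is a consequence of the antipode and counit axioms applied to $\Delta^{(2)}(x)$: one verifies that $\sum x_{(1)}\ot S(x_{(2)})x_{(3)} = x \ot 1$ in $H\ot H$, so sandwiching $y$ between the two tensorands gives $\sum x_{(1)}\,y\,S(x_{(2)})\,x_{(3)} = xy$, and by coassociativity the left side coincides with $\sum (\ad\,x_{(1)})(y)\,x_{(2)}$. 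This identity plays the role in a general Hopf algebra of the rewriting $xy = qyx + [x,y]_q$ used in the $q$-commutator case.

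First I would multilinearize the given PI to obtain a nontrivial multilinear identity $\sum_{\sigma\in\mathrm{Sym}(d)}\alpha_\sigma\, z_{\sigma(1)}\cdots z_{\sigma(d)} = 0$ of the same degree $d$. Next, following the blueprint of Proposition~\ref{pq_id}, I substitute suitably chosen monomials in two collections of fresh variables $x_1,\ld,x_n$ and $y_1,\ld,y_n$ in place of the $z_i$'s, then iteratively apply the displayed Hopf identity to each occurring product of the form $x_j y_i$ in order to migrate every ``$y$-type'' variable to the left. Each rewriting step replaces an adjacent pair $x_j y_i$ by a sum $\sum (\ad\,x_{j,(1)})(y_i)\cdot x_{j,(2)}$, so that at the end of the process every term is expressed as a product (monomial in adjoint actions) times (pure $x$-monomial). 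Restricting to the multilinear component in the $y$-variables and isolating the coefficient of a prescribed pure $x$-monomial then yields an identity of exactly the form~\eqref{eH_inner_id}.

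The main obstacle, exactly as in Proposition~\ref{pq_id}, is establishing the nontriviality of the derived identity, and this is where the bound $n = 3d^4$ enters. Since the combinatorial machinery underlying Regev's codimension estimate is insensitive to the particular algebraic operation occupying the ``commutator'' slot, the quantitative argument producing a nontrivial linear combination indexed by $\sigma\in\mathrm{Sym}(n)$ transfers verbatim from the $q$-commutator setting to the adjoint-action setting, once the rewriting rule $xy = qyx + [x,y]_q$ is replaced by $xy = \sum (\ad\,x_{(1)})(y)\,x_{(2)}$. In short, the proof is that of \cite[Proposition~4.1.11]{BMPZ} with this single substitution; no new combinatorics is required, and the bound $n = 3d^4$ is unchanged.
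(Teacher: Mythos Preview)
Your proposal is correct and matches the approach the paper intends: the paper does not spell out a proof but simply notes that the argument from \cite{MKP} (itself modeled on the $q$-commutator version in \cite{BMPZ}) carries over verbatim to the Hopf adjoint-action setting, with the bound $n=3d^4$ taken from \cite[Lemma~4.1]{BP}. Your identification of the rewriting rule $xy=\sum(\ad x_{(1)})(y)\,x_{(2)}$ as the Hopf analogue of $xy=qyx+[x,y]_q$, together with the Regev codimension argument for nontriviality, is exactly this transference.
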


In \cite{MKP} the proof of this theorem (attributed by M. Kochetov to Y. Bahturin) is given for smash products of the form $A\# H$, where $A$ is an $H$-module algebra. In that case $x_1,\ld,x_n\in H$ and $y_1,\ld,y_n\in A$. However, it works in our more general situation, without any changes. The estimate $n=3d^4$ is borrowed from \cite[Lemma 4.1]{BP}.

Being PI is ``almost'' equivalent to having an upper bound on the dimension of simple modules. On the one hand, a direct calculation shows that a matrix algebra $M_n(K)$ of order $n$ over an $\FF$-algebra $K$ does not satisfy a nontrivial identity of degree less than $2n$. A much sharper result is a theorem by Amitsur - Levitzky stating that $s_{2n}(x_1,\ld,x_{2n})=0$ holds in $M_n(\FF)$ and there are no identities of degree less than $2n$. Now using the Density Theorem, we can easily see that, for any $n$, if an algebra $A$ has a simple module of dimension $\ge n$ over its centralizer $D=\End_AV$ then $A$ does not satisfy any identity of degree less than $2n$.  

On the other hand, if $A$ is semiprimitive and all simple (left, right) modules are of dimension at most $n$ over $\FF$ then $A$ is a PI-algebra (satisfying  $s_{2n}(x_1,\ld,x_{2n})=0$). Indeed such an algebra is a subCartesian product of primitive algebras $\End_DV$. If $\dim_\FF D=\ell$ and $\dim_D V=m$, with $\ell m=n$, then $A$ is a subCartesian product of simple algebras of dimension $\ell m^2=mn$. If $\overline{\FF}$ is a splitting field for $D$ then $\overline{\FF}\ot \End_DV\cong M_{\sqrt{mn}}$, so $s_{2n}(x_1,\ld,x_{2n})=0$ must hold.

We complete this introduction by citing two important theorems about the structure of PI-algebras. The first one is an old theorem by I. Kaplansky \cite{IK}, dealing with primitive algebras.

\begin{theorem}\label{tIK}
Let $R$ be a primitive algebra, satisfying a polynomial identity of degree $d$. Then $R$ is simple, the center of $R$ is a field and $\dim_{Z(R)}R\le [d/2]^2$.
\end{theorem}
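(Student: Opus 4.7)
The plan is to combine the Jacobson density theorem with the Amitsur--Levitzki lower bound already recalled in the excerpt: $M_n(K)$ satisfies no identity of degree less than $2n$ for any algebra $K$.

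First I would use primitivity to fix a faithful simple $R$-module $V$. By Schur's lemma $D := \End_R V$ is a division ring, and by Jacobson's density theorem $R$ acts densely on $V$ regarded as a right $D$-vector space. If $\dim_D V \ge n$, density lets one realize every $D$-endomorphism of an $n$-dimensional subspace $W \subseteq V$ as the restriction of some $r \in R$, so $R$ contains a subalgebra surjecting onto $M_n(D)$. The latter would then inherit the PI of degree $d$, and the Amitsur--Levitzki lower bound forces $n \le d/2$. Hence $m := \dim_D V$ is finite with $m \le [d/2]$, and density combined with faithfulness upgrades this to $R \cong \End_D V = M_m(D)$. In particular $R$ is simple, and $Z(R) = Z(M_m(D)) = Z(D) \cdot I_m$ is a field, since the center of any division ring is closed under inversion.

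Next I would bound $\dim_{Z(R)} R$. Set $F := Z(R) = Z(D)$ and let $L \supseteq F$ be a maximal subfield of $D$. Granting that $D$ is finite-dimensional over $F$, the theory of central simple algebras gives $D \otimes_F L \cong M_{[L:F]}(L)$ and $[D:F] = [L:F]^2$, so
\[
R \otimes_F L \;\cong\; M_m(D \otimes_F L) \;\cong\; M_{m[L:F]}(L).
\]
By Regev's theorem (Theorem~\ref{tAR}), $R \otimes_F L$ satisfies a PI of degree $d$ as well, so the Amitsur--Levitzki lower bound applied to $M_{m[L:F]}(L)$ yields $2m[L:F] \le d$, and hence
\[
\dim_{Z(R)} R \;=\; m^2 [L:F]^2 \;\le\; [d/2]^2.
\]

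The main obstacle is the tacit step that $D$ is finite-dimensional over its center. Since $D$ embeds in $R$, it inherits the PI of degree $d$, and the problem reduces to the classical fact that every PI division algebra is finite-dimensional over its center. The standard route is to argue first that every $a \in D$ is algebraic over $F$: if $a$ were transcendental, then $F(a) \subseteq D$ would be an infinite field extension, and scalar extension from $F$ to $F(a)$ combined with Jacobson density and Amitsur--Levitzki applied inside $R \otimes_F F(a)$ would contradict the fixed PI degree $d$. A refinement of the same argument produces a uniform bound $[F(a):F] \le [d/2]$ for every $a \in D$, after which a Jacobson-type theorem on algebraic algebras of bounded degree promotes this to the required finite-dimensionality and closes the loop.
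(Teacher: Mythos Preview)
The paper does not prove Theorem~\ref{tIK}; it is merely quoted as a classical result of Kaplansky and cited to \cite{IK}. So there is no proof in the paper against which to compare your attempt.

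Your outline is essentially the classical Kaplansky argument and is structurally correct, but one step is mis-cited. You invoke Regev's theorem (Theorem~\ref{tAR}) to conclude that $R\otimes_F L$ satisfies an identity of degree $d$. Regev's theorem only says that the tensor product of two PI-algebras is PI; it gives no control over the degree, so as stated it does not yield what you need. The correct (and more elementary) fact is that a \emph{multilinear} identity of $R$ over $F$ remains an identity of $R\otimes_F K$ for any commutative $F$-algebra $K$; since any identity of degree $d$ can be linearized to a multilinear one of degree at most $d$, the bound $m[L:F]\le [d/2]$ follows as you want. The remaining point you flag---that a PI division ring is finite-dimensional over its center---is indeed the substantive technical ingredient; your sketch is along the right lines, though in practice one usually argues directly that $D\otimes_F L$ is a domain satisfying the same multilinear identity, forcing $[L:F]\le [d/2]$ for every commutative subfield $L\supseteq F$ of $D$, and then appeals to the theory of algebraic division algebras of bounded degree.
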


A very useful theorem due to E. C. Posner \cite{ECP} deals with prime algebras.

\begin{theorem}\label{tPosner}\emph{(E. C. Posner \cite{ECP})} Let $R$ be a prime algebra  over a field $\FF$, satisfying a nontrivial polynomial identity of degree $d$. Let $C$ be the center of $R$ and $Q$ the field of quotients of $C$. Then the algebra $R^Q=Q\ot_C R$ of central quotients of $R$ is central simple of dimension $n^2$, $n=[d/2]^2$, over its center $K$. Moreover, the identities of $R$ are the same as the identities of $R^Q$ and the same as the identities of $M_n(K)$. If $R$ has no zero divisors, then $R$ is a subalgebra of a division algebra of dimension $n^2$ over its center.
\end{theorem}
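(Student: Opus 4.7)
The plan is to prove Posner's theorem by producing enough central elements in $R$ to form its central localization, and then applying Kaplansky's Theorem~\ref{tIK} to the resulting algebra. First I would verify that $C = Z(R)$ is an integral domain: if $c_1, c_2 \in C$ are nonzero with $c_1 c_2 = 0$, then $c_1 R c_2 = R c_1 c_2 = 0$, contradicting primeness of $R$. Hence, provided $C \neq 0$, the field of fractions $Q = \mathrm{Frac}(C)$ exists and one can form the central localization $R^Q = Q \otimes_C R$.

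The decisive technical step is showing $C \neq 0$. Here I would invoke the theorem of Formanek (independently Razmyslov) on the existence of central polynomials for matrix algebras: for every integer $m$ there is a noncommutative polynomial $p_m$ that is not an identity of $M_m(\FF)$, but whose evaluations on $M_m(\FF)$ all lie in the scalar center. Applied to $R$ with $m=[d/2]$, Kaplansky's Theorem~\ref{tIK} controls every primitive image of $R$ (dimension at most $m^2$ over its center), so evaluations of $p_m$ on $R$ reduce to central elements modulo every primitive ideal; combined with primeness and Amitsur's theorem that the Jacobson radical of a PI algebra is nil, such evaluations themselves lie in $C$. By choosing $p_m$ whose evaluations on $R$ are not identically zero -- which is possible because $R$ has primitive images of sufficiently large degree -- one obtains a nonzero element of $C$.

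Once $R^Q$ is in hand, I would verify that it is still prime (central localization preserves primeness) and still satisfies every polynomial identity of $R$ (since identities extend $Q$-linearly from $R$). Its center $K$ is a field containing $Q$. The same central polynomial argument, applied now inside $R^Q$, shows that any nonzero two-sided ideal of $R^Q$ meets $K$ nontrivially, and since $K$ is a field this forces $R^Q$ to be simple, hence primitive. Theorem~\ref{tIK} then yields the dimension bound over $K$, and after extending scalars to a splitting field $\overline{K}$ of $K$ one obtains $\overline{K} \otimes_K R^Q \cong M_n(\overline{K})$ for the appropriate $n$. For the coincidence of identities: those of $R$ agree with those of $R^Q$ by $Q$-linearity, and they agree with those of $M_n(K)$ via the splitting field isomorphism together with the consequence of Theorem~\ref{tAR} that extension of scalars to a commutative algebra preserves PI (so $M_n(K)$ and $M_n(\overline{K})$ satisfy the same identities). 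Finally, if $R$ has no zero divisors, then $R^Q$ is a finite-dimensional $K$-algebra without zero divisors, hence a division algebra over $K$, into which $R$ embeds via $r \mapsto 1 \otimes r$.

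The main obstacle is the production of a nonzero central element in $R$: this is precisely the content of the Formanek-Razmyslov central polynomial theorem, whose proof is combinatorial and forms the technical core of the argument. Once it is available, everything else reduces to standard properties of central localization and to Theorem~\ref{tIK}.
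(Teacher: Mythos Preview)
The paper does not give a proof of this theorem: it is quoted as a classical result with a citation to Posner's original paper~\cite{ECP}, and the authors use it as a black box in later arguments. So there is nothing in the paper to compare your proposal against.

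That said, your outline is essentially the standard modern route to Posner's theorem (via Formanek--Razmyslov central polynomials and Rowen's theorem that nonzero ideals of a prime PI ring meet the center), rather than Posner's original 1960 argument, which predates central polynomials. A couple of points are muddled: if $R$ is a unital $\FF$-algebra then $C \supseteq \FF$ automatically, so ``$C \neq 0$'' is not the issue---the real content of the central-polynomial step is that every nonzero two-sided ideal of $R$ (and hence of $R^Q$) intersects $C$ nontrivially, which is what forces simplicity of $R^Q$. Your paragraph about producing a nonzero central element via primitive images and Amitsur's nil-radical theorem is aiming at the right target but conflates this with the trivial nonvanishing of $C$. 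Also note that the statement as written in the paper has $n=[d/2]^2$, which is almost certainly a typo for $n\le[d/2]$ (so that $\dim_K R^Q = n^2 \le [d/2]^2$, matching Kaplansky's bound in Theorem~\ref{tIK}); your argument implicitly uses the correct bound.
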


\section{Cocommutative Hopf algebras}\label{ssCCHA}

A well-known theorem on pointed cocommutative Hopf algebras is the following.

\begin{theorem}
Let $H$ be a pointed cocommutative Hopf algebra over a field $\FF$. Let $G = G(H)$ be the
group of group-like elements of $H$ and $H_0$ the connected component of 1. Then $G$ acts on
$H_0$ by conjugation and $H$ is isomorphic to the smash product $H_0\# \FF G$ via $h\# g \mapsto hg$ for $h\in H_0$, $g\in G$.
\end{theorem}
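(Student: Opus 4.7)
The plan is to invoke the standard decomposition of a pointed cocommutative coalgebra into its irreducible components indexed by group-like elements, and then verify that the resulting vector space decomposition becomes an isomorphism of smash products.

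First I would check the $G$-action on $H_0$. For $g \in G$ the conjugation map $c_g: h \mapsto ghg^{-1}$ is a Hopf algebra automorphism of $H$ (since $g$ is group-like, $c_g$ is both an algebra and a coalgebra homomorphism), so it permutes the irreducible components of $H$ as a coalgebra. As $c_g(1)=1$, the map $c_g$ must preserve the irreducible component containing $1$, namely $H_0$. This gives the desired action of $G$ on $H_0$ by Hopf algebra automorphisms.

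Next I would use the structure theorem for pointed cocommutative coalgebras to write $H = \bigoplus_{g \in G} H_g$, where $H_g$ is the irreducible component of $g$. Because $g$ is group-like, left multiplication by $g$ is a coalgebra isomorphism of $H$ that sends $1$ to $g$, and hence restricts to a coalgebra isomorphism $H_0 \to H_g$; so $H_g = gH_0$, and similarly $H_g = H_0 g$. One also checks that $H_0$ is a sub-Hopf algebra: closure under multiplication follows from the coradical filtration being an algebra filtration together with $H_0$ being the directed union of its intersections with this filtration, and closure under the antipode follows because $S$ is an anti-coalgebra map fixing $1$. Thus the map
\[
\varphi : H_0 \otimes \FF G \to H, \qquad h \otimes g \mapsto hg,
\]
is a vector space isomorphism and the smash product $H_0 \# \FF G$ makes sense.

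Finally I would verify that $\varphi$ is an algebra homomorphism by direct calculation: for $h,h' \in H_0$ and $g,g' \in G$,
\[
\varphi\bigl((h \# g)(h' \# g')\bigr) \;=\; \varphi\bigl(h(gh'g^{-1}) \# gg'\bigr) \;=\; h g h' g^{-1} \cdot g g' \;=\; (hg)(h'g').
\]
Preservation of the unit and of the coalgebra structure is automatic from the definitions. The main obstacle is the coalgebra decomposition $H = \bigoplus_{g \in G} gH_0$: cocommutativity is essential both for identifying the irreducible components with translates of $H_0$ and for showing that $H_0$ is a sub-Hopf algebra so that the smash product construction genuinely applies.
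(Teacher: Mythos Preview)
The paper does not actually prove this theorem: it is stated at the opening of Section~\ref{ssCCHA} as a ``well-known theorem'' and then used as a black box, with the subsequent discussion pointing to \cite{SM} for related structure theory. Your outline is the standard argument (essentially the Kostant--Cartier--Gabriel decomposition as presented, e.g., in \cite[5.6]{SM}): decompose the pointed cocommutative coalgebra into irreducible components indexed by $G(H)$, identify each component with a translate of $H_0$ via multiplication by the corresponding group-like, and then check multiplicativity directly. So there is nothing to compare against in the paper itself, and your approach is precisely the one the cited references would give; the proof is correct as sketched.
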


An algebra $H$ over a field $\FF$ is PI if and only if its extension to the algebraic closure of $\FF$ is PI. Since the condition of pointedness is automatically satisfied if $\FF$ is algebraically closed,
this theorem essentially reduces cocommutative Hopf algebras to the smash products of
connected cocommutative Hopf algebras and group algebras, where the group acts by
Hopf algebra automorphisms.  

Following Cartier (see \cite{AGI}), one uses the term \textit{hyperalgebra} for any connected cocommutative bialgebra. The
existence of the antipode is automatic for such bialgebras (see \cite[2.2.8]{BA}), so they are in
fact Hopf algebras.
It is well known (see \cite[5.6]{SM}) that in characteristic 0 any hyperalgebra $H_0$ is
isomorphic to the universal enveloping algebra $U(L)$ of the Lie algebra $L = P(H_0)$ of its
primitive elements.
So the question of polynomial identities of a cocommutative Hopf algebra of
characteristic 0 reduces to the study of the smash product of a universal enveloping algebra
$U(L)$ and a group algebra $\FF G$, where $G$ acts on $L$ by automorphisms.

We next recall some
known results about identities of cocommutative Hopf algebras. 

If $\FF$ is of characteristic zero, then a group algebra $\FF G$ is PI if and only if $G$ has an abelian subgroup of finite index \cite{IP}. An enveloping algebra $U(L)$ of a Lie algebra $L$ is PI if and only if $L$ is abelian \cite{L,B}.  A smash product $H=U(L)\#\FF G$ is PI if and only if $L$ is abelian and $G$ has an abelian normal subgroup $A$ of finite index which acts trivially on $L$ \cite{HLS,MKC}. 

These results were used in \cite{MKH} to prove the following.
\begin{theorem}\label{tCCHA}
Let $H$ be a cocommutative Hopf algebra of characteristic 0. Then the following conditions are equivalent: 
\begin{enumerate}
\item $H$ is PI as an algebra, 
\item There exists a normal commutative subHopfalgebra $A \subset H$ such that $H/HA^+$ is finite-dimensional, 
\item There exists a normal commutative subHopfalgebra $B\subset H$ such that $H$ is a finitely generated left $B$-module, 
\item The Lie algebra $L = P(H)$ of primitive elements is abelian and there exists a normal subHopfalgebra $C\subset \mathrm{corad}(H)$ of finite index such that $C$ is commutative and the adjoint action of $C$ on $L$ is trivial. 
\end{enumerate}

\end{theorem}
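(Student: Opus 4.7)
The plan is to reduce to a concretely described pointed situation and then apply the smash-product PI criterion for $U(L)\#\FF G$ recalled just above. First, since the PI property is preserved and reflected by extension of scalars (Regev's Theorem~\ref{tAR}), and since the formation of $P(H)$, $G(H)$, $\mathrm{corad}(H)$ and of normal commutative sub-Hopfalgebras all commute with base change, I can assume $\FF$ is algebraically closed. Then $H$ is pointed and, by the first theorem of this section, $H\cong U(L)\#\FF G$ with $L=P(H)$, $G=G(H)$, and $\mathrm{corad}(H)=\FF G$.

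For $(1)\Leftrightarrow(4)$, I apply the Handelman-Lawrence-Schelter / Kochetov criterion recalled above: $U(L)\#\FF G$ is PI iff $L$ is abelian and $G$ has an abelian normal subgroup $N$ of finite index acting trivially on $L$. Sub-Hopfalgebras of $\FF G$ have the form $\FF N$ for subgroups $N\subseteq G$, and the smash-product calculation $[x,n]=(x-n\cdot x)n$ for $x\in L$, $n\in N$, shows that $\ad(L)(\FF N)\subseteq\FF N$ (equivalently, that the adjoint action of $\FF N$ on $L$ is trivial) iff $N$ acts trivially on $L$, while $\ad(G)(\FF N)\subseteq\FF N$ iff $N$ is normal in $G$. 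Under these dictionaries the Kochetov conditions match (4) with $C=\FF N$.

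For $(4)\Rightarrow(2)$ and $(4)\Rightarrow(3)$ I take $A=B=U(L)\#\FF N$. Triviality of the $N$-action on $L$ makes this $U(L)\ot\FF N$, which is commutative by abelianness of $L$ and $N$; the same calculation yields normality of $A$ in $H$. The coset decomposition $G=\bigsqcup_i g_iN$ gives $H=\bigoplus_i A g_i$, so $H$ is $A$-free of rank $[G:N]$, whence $H$ is finitely generated as a left $A$-module and $\dim_\FF(H/HA^+)=[G:N]<\infty$.

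For the remaining direction I prove $(3)\Rightarrow(1)$, which subsumes $(2)\Rightarrow(1)$ via the freeness theorem for normal sub-Hopfalgebras in a cocommutative Hopf algebra. Under (3), normality of $B$ gives $HB^+=B^+H$, so the images of any $B$-generating set of $H$ span $H/HB^+$ over $\FF$, and hence this quotient is finite-dimensional; freeness then produces $H\cong B^{\oplus n}$ as a right $B$-module with $n=\dim H/HB^+$. Left multiplication by $H$ on itself commutes with the right $B$-action, embedding $H\hookrightarrow\End_B(H_B)\cong M_n(B)=M_n(\FF)\ot_\FF B$, which is PI by Theorem~\ref{tAR}. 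The main obstacle is the Hopf-algebraic freeness step for normal sub-Hopfalgebras in the possibly infinite-dimensional cocommutative setting; however, in the concrete situation $H=U(L)\#\FF G$, $B=U(L)\ot\FF N$ freeness is immediate from PBW and the coset decomposition, so one never leaves the concrete case once one has reached it.
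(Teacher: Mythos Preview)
The paper does not actually prove this theorem; it merely cites it from \cite{MKH}, saying that the smash-product PI criterion for $U(L)\#\FF G$ was ``used in \cite{MKH} to prove'' it. So there is no proof in the paper to compare against line by line. Your overall strategy---pass to the algebraic closure so that $H$ becomes pointed, identify it with $U(L)\#\FF G$, and invoke the Handelman--Lawrence--Schelter/Kochetov criterion---is precisely the route the paper indicates, and your treatment of $(1)\Leftrightarrow(4)$ and $(4)\Rightarrow(2),(3)$ is correct.

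There is, however, a genuine gap in your $(3)\Rightarrow(1)$ step (and hence also in $(2)\Rightarrow(1)$). Your last sentence tries to sidestep the freeness issue by saying that ``in the concrete situation $H=U(L)\#\FF G$, $B=U(L)\ot\FF N$'' freeness is clear. But in condition (3) the sub-Hopfalgebra $B$ is \emph{given}, not constructed: it is an arbitrary normal commutative sub-Hopfalgebra over which $H$ is finitely generated, and there is no reason it coincides with the particular $U(L)\ot\FF N$ you built in $(4)\Rightarrow(3)$. So the ``one never leaves the concrete case'' dodge is a non sequitur. The repair is to invoke the actual freeness theorem: a pointed Hopf algebra is free as a module over any Hopf subalgebra (Radford). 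Since you have already reduced to the algebraic closure, $H$ is pointed and this applies to whatever $B$ you are handed; then your embedding $H\hookrightarrow\End_B(H_B)\cong M_n(B)$ goes through and $(3)\Rightarrow(1)$ follows.

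A smaller caution: your opening claim that formation of $G(H)$, $\mathrm{corad}(H)$, and normal commutative sub-Hopfalgebras ``all commute with base change'' is too quick. For cocommutative $H$ over a non-algebraically-closed field the coradical is a direct sum of duals of finite field extensions, and $G(\bar H)$ can be strictly larger than $G(H)$; the existence of $A$, $B$, or $C$ over $\bar\FF$ does not obviously descend to $\FF$. This does not affect the logical equivalence once you argue cyclically (only the step $(1)\Rightarrow(4)$ genuinely needs the algebraic closure, and a Galois-descent argument recovers $C\subset\mathrm{corad}(H)$ over $\FF$), but the reduction deserves a sentence of justification rather than a blanket assertion.
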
 

In the case of fields of positive characteristic $p>0$, a group algebra $\FF G$ is PI \cite{P} if and only if $G$ contains a subgroup of finite index whose  commutator subgroup is a finite $p$-group. 

The question of when an arbitrary hyperalgebra over a field of positive characteristic is PI, remains largely open. Some cases where the answer is known are listed below.

An enveloping algebra $U(L)$ is PI  \cite{BA} if and only if there is an abelian subalgebra $M$ of finite codimension in $L$ and all inner derivations are algebraic of finite bounded degree. The last condition means that there is a natural number $n$ such that each inner derivation $\ad\,x$, $x\in L$, is annihilated by a polynomial of degree $n$.

A restricted enveloping algebra $u(L)$ of a restricted Lie algebra $L$ is PI  \cite{PE,PV} if and only if $L$ has restricted ideals $N\subset M\subset L$ such that $\dim L/M,\: \dim N\le\infty$, $[M,M]\subset N$, $[N,N]=0$ and there is  natural $n$ such that $x^{[p^n]}=0$, for each $x\in N$. 

If $H$ is a reduced hyperalgebra over a perfect field $\FF$ (that is, $H^\ast$ has no nilpotent elements), then $H$ is PI \cite{MKP} if and only if $H$ is commutative.

In the case of a smash product $U(L)\#\FF G$, over a field $\FF$ of characteristic $p>0$, this algebra is PI (\cite{BP}, see also \cite{HLS}) if and only if 
 \begin{enumerate}
 \item there exists an abelian $G$-invariant ideal $H\subset L$ of finite codimension and
all inner derivatives are algebraic of bounded degree,
\item there exists a normal subgroup $A\subset G$ of finite index such that the commutator
subgroup $[A,A]$ is a  finite abelian $p$-group,
\item $A$ acts trivially on $L$.
\end{enumerate}

In the case of $u(L)\#\FF G$, $\chr{\FF}=p>0$, this algebra is PI (\cite{BP}) if and only if 
\begin{enumerate}
 \item there are $G$-invariant restricted ideals $N\subset M\subset L$ such that
 \begin{enumerate}
 \item $\dim L/M,\:\dim N<\infty$,
 \item $M/N$ and $N$ are abelian,
 \item the $p$-map on $N$ is nilpotent,
\end{enumerate}  
\item there is a subgroup $A\subset G$ such that
\begin{enumerate}
\item $|G:A|<\infty$,
\item $[A,A]$ is a finite $p$-group,
\end{enumerate}
\item $A$ acts trivially on $M/N$.
\end{enumerate}

In the general case of connected Hopf algebras, Mikhail Kochetov \cite{MKP} conjectured that the following is true.

\begin{conjecture} Let $H$ be a hyperalgebra over a perfect field $\FF$. If $\chr{\FF} > 0$, assume also
that $H$ is reduced. Let $G$ be a group acting on $H$ by bialgebra automorphisms. Then the smash product $H\# \FF G$ is PI if and only if 
\begin{enumerate}
\item $H$ is commutative,
\item  there exist normal subgroups $G_0 \subset G_1  \subset G$ such that $G/G_1$ is finite, $G_1/G_0$ is
abelian, and $G_0$ is a finite $p$-group if $\chr{\FF} = p > 0$ and trivial if $\chr{\FF} = 0$,
\item  $G_1$ acts trivially on $H$.
\end{enumerate}
\end{conjecture}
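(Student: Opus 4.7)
The plan is to handle the two implications separately, with sufficiency being routine via Regev's Theorem~\ref{tAR} and necessity following, in spirit, the strategy of \cite{BP} for the enveloping-algebra case, now driven by the ``reduced'' hypothesis that forces commutativity of $H$. For sufficiency, assume (1)--(3). Condition (3) makes the multiplication in $H\#\FF G_1$ untwisted, so $H\#\FF G_1\cong H\ot_\FF \FF G_1$. Condition (1) makes $H$ commutative, hence PI; condition (2) gives $[G_1,G_1]\subseteq G_0$ finite (trivial if $\chr\FF=0$, a finite $p$-group if $\chr\FF=p>0$), so $\FF G_1$ is PI by the Passman and Isaacs--Passman characterizations cited above. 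By Theorem~\ref{tAR}, $H\ot_\FF \FF G_1$ is PI. Finally, with $n=[G:G_1]<\infty$, choosing a transversal realizes $H\#\FF G$ as a subalgebra of $M_n(H\#\FF G_1)$ via its action on itself as a free right $H\#\FF G_1$-module, and a second application of Regev's theorem shows this matrix algebra is PI.

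For necessity, assume $H\#\FF G$ satisfies a nontrivial identity of degree $d$. The subalgebra $H$ is then PI, and Kochetov's criterion for reduced hyperalgebras over perfect fields \cite{MKP} (combined, in characteristic zero, with the classical fact that $U(L)$ is PI iff $L$ is abelian) forces $H$ to be commutative, establishing (1). Restricting to the subalgebra $\FF G$ and applying Passman's theorem produces a normal subgroup $N\triangleleft G$ of finite index whose commutator subgroup $[N,N]$ is a finite $p$-group (trivial if $\chr\FF=0$). What remains is to refine $N$ to a normal subgroup $G_1$ that in addition acts trivially on $H$; this refinement is the main obstacle.

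To address it, let $K\triangleleft G$ denote the kernel of the action $G\to\Aut(H)$. I would apply the inner identity of Theorem~\ref{tH_inner_id} with the $x_i$ drawn from $G$ and the $y_j$ drawn from the primitive layer $P(H)$: since $(\ad g)(y)=g\cdot y$ for grouplike $g$ and primitive $y$, equation (\ref{eH_inner_id}) becomes a multilinear restriction on the $G$-orbits inside $P(H)$. Combined with the delta-set techniques of Section~\ref{sDS} and the commutativity of $H$, this should force the $G$-centralizers of elements of $P(H)$ to have uniformly bounded index, from which a standard delta-set argument yields $[G:K]<\infty$. Setting $G_1:=N\cap K$ and $G_0:=[G_1,G_1]$ then produces normal subgroups satisfying (2) and (3). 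The technical crux, and the reason the conjecture remains open, is propagating trivialization of the action from $P(H)$ through the coradical filtration to all of $H$ --- here the reducedness hypothesis in positive characteristic is essential, because without it one cannot exclude nontrivial actions of finite-index subgroups on the ``nilpotent part'' of $H$ that are invisible to the primitive layer.
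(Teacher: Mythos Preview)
The statement you are attempting to prove is presented in the paper as a \emph{conjecture}, not a theorem; there is no proof in the paper to compare against. The paper merely records that Kochetov \cite{MKP} verified the conjecture in the special case $H=H^{\mathrm{gr}}$, the associated graded with respect to the coradical filtration.

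Your sufficiency direction is correct and matches the standard argument: normality of $G_1$ together with $[G:G_1]<\infty$ lets you embed $H\#\FF G$ into $M_n(H\#\FF G_1)$, the trivial action identifies $H\#\FF G_1$ with $H\ot_\FF \FF G_1$, and Regev's theorem plus the group-ring criteria finish it.

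Your necessity direction, however, is not a proof but an outline that stops precisely at the open problem. You obtain (1) from Kochetov's reduced-hyperalgebra theorem and the normal subgroup $N$ from Passman, and then you need a subgroup of finite index acting trivially on $H$. The sentence ``this should force the $G$-centralizers of elements of $P(H)$ to have uniformly bounded index, from which a standard delta-set argument yields $[G:K]<\infty$'' is the unproved step: Theorem~\ref{tMKfil}-style arguments give finite \emph{codimension} of a delta set inside $\FF G$, not finite index of a \emph{subgroup} of $G$, and bridging that gap already requires nontrivial work even in the enveloping-algebra case. More seriously, you yourself identify the real obstruction: even if one controls the action on $P(H)$, a reduced hyperalgebra in positive characteristic need not be generated by its primitives, and there is no mechanism in your argument to propagate triviality of the action through the coradical filtration. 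That is exactly why the statement is recorded as a conjecture rather than a theorem, and your proposal does not close this gap.
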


In \cite{MKP}, the author proves that the conjecture is true if $H=H^{\mathrm{gr}}$ where the latter algebra is the associated graded algebra for $H$ with respect to the coradical filtration on $H$.

\section{Braided Hopf algebras}\label{ssNCCHA} Let $T$ be an abelian group and let $\beta:T\times T\to\FF^\times$ be a skew-symmetric bicharacter, sometimes called ``color''. A $T$-graded algebra $L=\bigoplus_{t\in T} L_t$ with a commutator product $(x,y)\to [x,y]$ is called a \textit{$\beta$-Lie superalgebra}, or a \textit{color Lie superalgebra}, if the following hold for any $x\in L_t$, $y\in L_u$, $z\in L_v$:
\begin{eqnarray}
&[x,y]=\beta(t,u)[y,x]\mbox{ (\textit{color anticommutativity})},\label{ecac}\\
&[[x,y],z]=[x,[y,z]]-\beta(t,u)[y,[x,z]]\mbox{ (\textit{color Jacobi identity}).\label{cJI} }
\end{eqnarray}
A $T$-graded associative algebra $A=\bigoplus_{t\in T} A_t$ becomes a color Lie superalgebra if one sets 
\begin{equation}\label{eCC}
[x,y]_\beta=xy - \beta(t,u)yx, \mbox{ for any homogeneous }x\in A_t,\:y\in A_u.
\end{equation}

Given a color (or $\beta$-) Lie superalgebra $L$, the (universal) enveloping algebra $U(L)$ for $L$ is defined as follows. First of all, $U(L)$ is a $T$-graded associative algebra, generated by $L$, which is a $\beta$-subsuperalgebra of $U(L)$ (that is, closed under $\beta$-commutator (\ref{eCC})). Second, given a $T$-graded  associative algebra $A$ and any homomorphism of $\beta$-superalgebras $\vp:L\to A$, there exists a (unique) homomorphism of associative algebras $f:U(L)\to A$ whose restriction to $L$ is $\vp$.

Given an alternating bicharacter $\beta:T\times T\to \FF^\times$, for any $t\in T$, we must have either $\beta(t,t)=1$ and then $t$ is called \textit{even}, or $\beta(t,t)=-1$, and then $t$ is called \textit{odd}. The set of all even elements in $T$ is a subgroup $T_+$, the set of odd elements is a coset of $T_+$, which we denote by $T_-$. Respectively, if $V=\bigoplus_{t\in T}V_t$ is a $T$-graded space then we write $V_+=\bigoplus_{t\in T_+}V_t$ and $V_-=\bigoplus_{t\in T_-}V_t$. If $L$ is a $\beta$-superalgebra with $L=L_+$, then we call $L$ a $\beta$-Lie algebra.

A theorem from \cite{BMPZ} says that in the case of characteristic zero, if $L$ is a $\beta$-Lie algebra, then $U(L)$ is PI if and only if $L$ is abelian. If $L$ is a general $\beta$-superalgebra $L=L_+\oplus L_-$, then $U(L)$ is PI if and only if there exists a homogeneous $L_+$-submodule $M\subset L_-$ such that $\dim L_-/M,\: \dim[L_+,M]<\infty$, and $[L_+,L_+]=[M,M]=0$. 

If $L$ is a $\beta$-Lie \textit{superalgebra}, then $L$ is a Yetter-Drinfeld module over $\FF T$, that is, $L\in ^{\FF T}_{\FF T}\!\!\mathcal{YD}$, where the action of $T$ on $L$ is given by $t\ast x_u=\beta(t,u)x_u$. The same is true for $U(L)$. The smash product $U(L)\#\FF T$ becomes a Hopf algebra, called the \textit{bosonization} of $U(L)$.

A generalization of such algebras is the smash product of the form  $U(L)\#\FF G$, where $G$ is a group (not necessarily abelian) acting on $L$ (hence on $U(L)$) by $T$-graded automorphisms: $g\ast L_t=L_t$, for any $g\in G$ and any $t\in T$.

There are no papers where the polynomial identities of such bosonizations for $\beta$-Lie superalgebras have been examined. But there is a result of M. Kochetov \cite{MK}, where $\ch(\FF)=0$, $L$ is an ordinary Lie superalgebra, $L=L_0+L_1$, and $G$ is an arbitrary group (not just $G=T$). Kochetov shows that $H=U(L)\# \FF G$ is a PI-algebra if and only if  there exist $L_0$- and $G$-invariant subspaces $N\subset M\subset L_1$ and an abelian subgroup of finite index $A\subset G$ such that $\dim L_1/M, \dim N<\infty$, $[L_0,L_0]=[M,M]=0$, $[L_0,M]\subset N$, and $A$ acts trivially  on $L_0$ and $M/N$.

The following result, although quite expected, is nevetherless new.

\begin{theorem}\label{tLSABW}
Let $\FF$ be a field of characteristic~0.
Let $\beta: T\times T\to\FF^\times$ be an alternating bicharacter on a finite abelian group $T$,  $L$ a $\beta$-Lie superalgebra, with finite support, $G$ a group acting on $L$ by $T$-graded automorphisms, $U(L)$ the enveloping algebra for $L$, $\FF G$ the group algebra for $G$. Then $U(L)\#\FF G$ is a PI-algebra if and only if the following hold.
\begin{enumerate}
\item $L_+$ is abelian;
\item There is an abelian subgroup $A$ of finite index in $G$ which acts trivially on $L_+$;
\item There is a $G$-invariant $L_+$-submodule $M$ in $L_-$ such that $[M,M]=0$ and $\dim[L_+,M]<\infty$;
\item $A$ acts trivially on $M/[L_+,M]$.
\end{enumerate}
\end{theorem}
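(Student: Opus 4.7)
The plan is to follow the pattern of Kochetov's proof for ordinary Lie superalgebras, noting how the color structure (the bicharacter $\beta$ on $T$) modifies but does not obstruct the key computations. Since the enveloping algebra $U(L)$ of a color Lie superalgebra admits a PBW-type decomposition and is $T$-graded, and since enveloping algebras of abelian color subsuperalgebras behave, up to the color twist, like tensor products of polynomial and Grassmann algebras, the general framework carries over. Throughout, color commutators will be replaced by $q$-commutators $[x,y]_q$ with $q=\beta(t,u)$, so that Proposition~\ref{pq_id} and Theorem~\ref{tH_inner_id} apply directly.

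For necessity, assume $U(L)\#\FF G$ satisfies a PI. Since both $U(L)$ (as a subalgebra via $x\mapsto x\#1$) and $\FF G$ (via $g\mapsto 1\#g$) are PI-algebras, the BMPZ result cited in Section~\ref{ssNCCHA} immediately yields that $L_+$ is abelian and gives a homogeneous $L_+$-submodule $M_0\subset L_-$ with $[M_0,M_0]=0$, $\dim L_-/M_0<\infty$ and $\dim[L_+,M_0]<\infty$; and the classical Passman--Isaacs theorem gives an abelian normal subgroup $A_0\leq G$ of finite index. Because $G$ permutes the finite-dimensional subspace $L_-/M_0$ together with $[L_+,M_0]$, the $G$-stabilizer of $M_0$ has finite index, so after replacing $M_0$ by its $G$-invariant hull $M$ and $A_0$ by its intersection with this stabilizer we obtain condition (3) together with a finite-index abelian normal $A\leq G$. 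To extract conditions (2) and (4)---that $A$ acts trivially on $L_+$ and on $M/[L_+,M]$---apply Theorem~\ref{tH_inner_id} to $H=U(L)\#\FF G$: substituting group elements of $A$ for the $x_i$'s and suitably chosen PBW monomials for the $y_j$'s, and reading off the leading term in the PBW filtration on $U(L)$, forces the inner derivations $\ad g$ for $g\in A$ to annihilate the required spaces modulo the finite-dimensional piece $[L_+,M]$. This is precisely where the delta-set machinery of Section~\ref{sDS} is used, adapted to the color setting.

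For sufficiency, assume (1)--(4) and set $N=[L_+,M]$, which is $G$-invariant, $L_+$-invariant and finite-dimensional. The subalgebra $B=U(L_++M)\subset U(L)$, modulo the ideal generated by $N$, is the enveloping algebra of an abelian color Lie superalgebra and is therefore PI; by Regev's Theorem~\ref{tAR} and the triviality of the $A$-action on $L_+$ and on $M/N$, the smash product $B\#\FF A$ differs from the $T$-graded tensor product $B\otimes\FF A$ only by an action factoring through $A/A\cong 1$ on the associated graded, hence is also PI. The full algebra $U(L)\#\FF G$ is obtained from $B\#\FF A$ by three finite extensions: the finite-dimensional cokernel $L_-/M$, the finite-dimensional ideal generated by $N$, and the finite-index inclusion $\FF A\hookrightarrow\FF G$. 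Each of these preserves the PI property (Regev plus the standard finite-dimensional/finite-index PI-extension lemmas recalled in Section~\ref{ssPI}), yielding the conclusion.

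The main obstacle will be the verification inside the necessity direction that $A$ must act trivially on $M/[L_+,M]$. Kochetov's $\z$-graded argument exploits a sign flip to isolate the odd part; in the color case every swap of two homogeneous generators introduces a scalar $\beta(t,u)\in\FF^\times$, so one must track a cocycle's worth of scalars throughout the substitution into \eqref{eH_inner_id}. The key point making this go through is that the scalars $\lambda_\sigma$ in Theorem~\ref{tH_inner_id} are freely chosen, so the coefficient $\beta$-twists can be absorbed, and the delta-set arguments of Section~\ref{sDS}---which are formulated there with $q$-commutators precisely for this purpose---apply uniformly across all graded components. Once this bookkeeping is handled, no new ideas beyond those of \cite{MK} are required.
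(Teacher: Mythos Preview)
Your outline follows essentially the same route as the paper's proof: the sufficiency argument via the chain $U(L_+\oplus M)\#\FF A$, the nilpotent ideal generated by $N=[L_+,M]$, and finite extensions is exactly what the paper does, and your necessity argument correctly identifies delta-set methods applied to the identity of Theorem~\ref{tH_inner_id} as the mechanism for conditions (2) and (4).

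Two points need correction. First, your ``$G$-invariant hull'' step is not sound as written: if $M_0$ satisfies $[M_0,M_0]=0$, the sum $\sum_g gM_0$ need not. The paper avoids this by first reducing, without loss of generality, to the case $G$ abelian and $[L_-,L_-]=0$ (so that \emph{every} subspace of $L_-$ has trivial self-bracket), and only then produces the $G$-invariant $M$. Second, your remark that ``the scalars $\lambda_\sigma$ in Theorem~\ref{tH_inner_id} are freely chosen, so the coefficient $\beta$-twists can be absorbed'' is incorrect: those scalars are fixed by the Regev-type argument that produces the identity, not at your disposal. The reason the color case goes through is different. The paper handles condition~(2) and condition~(4) by separate arguments: for the triviality of the $A$-action on $L_+$ it adapts a specific computation from \cite[p.~374]{BP} that relies only on $\beta$-commutativity of the integral domain $U(L_+)$; for the action on $M/N$ it adapts Kochetov's Lemmas~2.9--2.12 from \cite{MK}, where the key observation is that if $x,y\in L_-$ are homogeneous of the \emph{same} degree $t$, then $yx=\beta(t,t)xy=-xy$, so the Grassmann-algebra calculations run exactly as in the $\ZZ_2$-graded case. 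Neither argument involves reshuffling the $\lambda_\sigma$.
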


We give a proof of this result below in Section \ref{sPT}.

If $\FF$ is a field of positive characteristic and $L$ is a color Lie superalgebra or color Lie $p$-superalgebra over $\FF$, the conditions under which $U(L)\#\FF G$ or $u(L)\#\FF G$ is a PI-algebra are not known.

\section{Techniques}\label{sDS}

\subsection*{Group-theoretical prelude}%\label{ssGTP}

If a \textit{group $G$ has a subgroup $A$ of finite index $m$ whose commutator subgroup $[A,A]$ is either $1$ or of order $p^k$ in case $\chr{\FF}=p>0$}, then the group algebra $R=\FF G$ satisfies a nontrivial polynomial identity. The reason is that in this case $R$ can be viewed as a free left $S$-module of rank $m$, where $S=\FF A$. The right regular action of $R$ on itself imbeds $R$ in the matrix algebra $M_m(S)\cong M_m(\FF)\ot S$. By Regev's Theorem \cite{AR}, stated here as Theorem~\ref{tAR}, the tensor product of two PI-algebras is PI. If $A$ is abelian, then $S$ is commutative ($[x,y]=0$ holds in $S$), hence $R$ is PI. If $A$ is $p$-abelian, that is, $[A,A]=p^k$ and $\chr{\FF}=p>0$, then one easily shows that $[x,y]^{p^k}=0$ is satisfied in $S$. Again, $R$ is PI. 

The above condition on $G$ is also necessary for $\FF G$ to satisfy a nontrivial polynomial identity but proving this needs much more sophistication. 

If $G$ is a group then one defines the Delta set $\Delta(G)$ to be the characteristic subgroup consisting of the elements having only finitely many conjugates. In other words, $\Delta(G)$ is the union of the set of all finite congugacy classes. A group $G$ with $G=\Delta(G)$ is called an \textit{FC-group}. Given a natural number $k$, the Delta set $\Delta_k(G)$ is the set of all elements in $G$ whose conjugacy classes have at most $k$ elements. An important theorem of B.H. Neumann and J. Wiegold \cite{W} says that if $k$ is a natural number and $G$ is a group whose commutator subgroup $[G,G]$ is finite of order $k$ then $G=\Delta_k(G)$; conversely, if $G=\Delta_k(G)$ then $|[G,G]|<(k^4)^{k^4}$.

If a group $G$ has an abelian (normal) subgroup $A$ of finite index then $A\subset\Delta(G)$ so that $\Delta(G)$ has finite index in $G$. In \cite{S} using Posner's Theorem \ref{tPosner}, M. Smith has shown that if $\FF G$ is a \textit{prime} group ring satisfying a nontrivial polynomial identity then $G$ has an abelian subgroup of finite index. In \cite{P} D. S. Passman has shown, without any conditions on the primeness of $\FF G$, that if $\FF G$ satisfies a polynomial identity of degree $n$ and $k=(n!)^2$  then the \textit{set} $\Delta_k(G)$ has index in $G$ not exceeding $(k+1)!$. So if $\FF G$ is PI, then the existence of a subgroup of finite index with finite commutator subgroup follows. 

Passman also produced an example in characteristic $p>0$ showing that the existence of an abelian subgroup of finite index is not necessary in the case of the fields of prime characteristic~\cite[Theorem 4.5]{PL}.

\subsection*{Lie algebras}%\label{ssOLA} 
A special feature of Lie algebras, compared with groups, is that, in the case $\chr{\FF}=0$, every nonabelian Lie algebra $L$ has an infinite-dimensional irreducible representation $\rho:L\to\End\,V$. This extends to an irreducible representation of $U(L)$. Using Kaplansky's Theorem \ref{tIK}, we can conclude that $U(L)$ is a PI-algebra in the case $\chr{\FF}=0$ if and only if $L$ is abelian. 

Another feature is that the enveloping algebra $U(L)$ of a Lie algebra $L$ over any field has no zero divisors. In particular, $U(L)$ is always a prime algebra. So Posner's Theorem~\ref{tPosner} applies.
This theorem works not only in the case of enveloping algebras of Lie algebras but also in other situations, such as enveloping algebras of color Lie algebras or some smash products, which can be viewed as skew group rings of groups with coefficients in an algebra. An important paper related to this approach is \cite{HLS}.

In the case of enveloping algebras of Lie algebras over a field of characteristic $p>0$, see \cite{BA}, where Posner's Theorem~\ref{tPosner} was used in combination with the technique of Delta sets, as defined below.

Let $L$ be a Lie algebra over the field $\FF$ and let $U(L)$ denote its universal enveloping algebra. In case $\chr{\FF}= p > 0$, assume that $L$ is restricted and $u(L)$ is 
its restricted enveloping algebra. Then $U(L)$, $u(L)$, and the group
ring $\FF[G]$, for a group $G$, are all Hopf algebras and hence are similar in many ways.
In particular, since questions on group algebras have been solved using
$\Delta$-methods, it was therefore reasonable to try to find similar techniques in the
Lie context. To this end one considers
\begin{equation}\label{eDelta}
\Delta=\Delta(L)=\{x\in L\,|\,\dim_{\,\FF}\,[x,L]<\infty\},
\end{equation}
the (restricted) Lie ideal of $L$ introduced in \cite{BA}. In the same way, as in the case of groups, we have Delta sets 
\[
\Delta_n(L)=\{x\in L\,|\,\dim_\FF[x,L]\le n\}.
\]
Clearly, $\Delta(L)$ is an ideal in $L$. The sets $\Delta_n(L)$ are not ideals. Still, one says that $\Delta_n(L)$ has codimension $m$ in $L$ if there is an $m$-dimensional subspace $V$ in $L$ such that $L=\Delta_n(L)\oplus V$ and $m$ is the minimal number with this property. 

It was shown in \cite{BA} that if some $\Delta_n(L)$ is of codimension $m$ in $L$, then $L$ has a subalgebra $M$ such that $L/M$ and $[M,M]$ are finite-dimensional. Here in place of the B. H. Neumann - Wiegold theorem in the case of groups one uses a general result about bilinear maps due to P. M. Neumann \cite{PMN}: 
\begin{theorem}\label{tPMN}
If $f:U\times V\to W$ is a bilinear map such that for each $u\in U$ one has $\dim_\FF\,f(u,V)\le m$ and for each $v\in V$ one has $\dim_\FF\,f(U,v)\le n$ then $\dim_\FF f(U,V)\le mn$.
\end{theorem}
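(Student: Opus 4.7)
My plan is to prove Theorem~\ref{tPMN} by a greedy selection in $U$ combined with an upper-triangular determinant argument. A directed-union argument reduces the statement to $U$ and $V$ finite-dimensional (if the bound holds for every pair of finite-dimensional subspaces, it holds for the whole of $f(U,V)$), and scalar extension allows us to assume $\FF$ is infinite, since $\dim_\FF f(U,V)$ and the rank hypotheses are preserved under a suitable extension.

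I would then construct $u_1, u_2, \ldots \in U$ greedily: at step $k$, if some $u\in U$ satisfies $f(u,V)\not\subseteq \Sigma_{k-1}:=f(u_1,V)+\cdots+f(u_{k-1},V)$, choose such a $u$ as $u_k$; otherwise stop. The process terminates after $t<\infty$ steps since $\dim\Sigma_k$ strictly increases and is bounded by $\dim W$, and at termination $f(u,V)\subseteq\Sigma_t$ for every $u\in U$, so $f(U,V)=\Sigma_t$ and hence $\dim f(U,V)\le tm$. The theorem thus reduces to proving $t\le n$.

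Suppose for contradiction that $t\ge n+1$. For each $k\le n+1$, pick $v_k\in V$ with $f(u_k,v_k)\notin\Sigma_{k-1}$ and $\xi_k\in W^*$ with $\xi_k|_{\Sigma_{k-1}}=0$ and $\xi_k(f(u_k,v_k))=1$. For parameters $\lambda_l\in\FF$, set $v(\lambda)=\sum_{l=1}^{n+1}\lambda_l v_l\in V$ and form the $(n+1)\times (n+1)$ matrix $A(\lambda)$ with entries $A_{l,k}(\lambda)=\xi_l(f(u_k,v(\lambda)))$. For $k<l$, $f(u_k,v_{l'})\in f(u_k,V)\subseteq\Sigma_{l-1}$ for every $l'$, so $\xi_l$ kills these terms and $A_{l,k}(\lambda)=0$; thus $A(\lambda)$ is upper triangular with diagonal entry
\[
A_{l,l}(\lambda)=\sum_{l'}\lambda_{l'}\,\xi_l(f(u_l,v_{l'})),
\]
a linear polynomial in the $\lambda_{l'}$'s whose coefficient of $\lambda_l$ equals $\xi_l(f(u_l,v_l))=1$; hence $A_{l,l}$ is nonzero as a polynomial, and $\det A(\lambda)=\prod_l A_{l,l}(\lambda)$ is a nonzero polynomial in the $\lambda_l$. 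Since $\FF$ is infinite, some $\lambda\in\FF^{n+1}$ gives $\det A(\lambda)\ne 0$; for such a $\lambda$ the matrix $A$ is invertible, so $\{f(u_k,v(\lambda))\}_{k=1}^{n+1}$ are linearly independent in $W$, forcing $\dim f(U,v(\lambda))\ge n+1$ and contradicting the hypothesis.

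The main technical subtlety is the clean reduction to infinite $\FF$: one must know that the rank bounds $\dim f(u,V)\le m$ and $\dim f(U,v)\le n$ extend to $U_{\FF'}$ and $V_{\FF'}$ under scalar extension. Both conditions cut out closed determinantal subvarieties of $\Hom(V,W)$ and $\Hom(U,W)$, so the set-theoretic containment over $\FF$ is scheme-theoretic as soon as $|\FF|$ exceeds the degrees of the defining minors; for smaller finite $\FF$ one first enlarges $\FF$ to a sufficiently large extension before running the argument, and the desired bound on $\dim_\FF f(U,V)$ descends because scalar extension preserves this dimension.
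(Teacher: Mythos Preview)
The paper does not give a proof of this theorem; it is only stated, with a reference to P.~M.~Neumann's paper~\cite{PMN}, so there is nothing in the paper to compare your argument against.

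Your core argument---the greedy selection of $u_1,\dots,u_t$ together with the upper-triangular matrix $A(\lambda)$---is correct and is a standard route to the result over infinite fields. In fact it works directly, with no scalar extension, whenever $|\FF|>n+1$ (or, by the $U\leftrightarrow V$ symmetry, whenever $|\FF|>m+1$): the $n+1$ diagonal entries $A_{l,l}(\lambda)$ are nonzero linear forms, and $n+1$ hyperplanes cannot cover $\FF^{\,n+1}$ once $|\FF|\ge n+2$.

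The gap is in your last paragraph. The claim that for small $\FF$ one may ``first enlarge $\FF$ to a sufficiently large extension before running the argument'' is circular: the whole difficulty with small fields is precisely that the rank hypotheses need not survive base change. For a concrete failure, take $\FF=\FF_2$, $U=\FF_2^{\,2}$, $V=W=\FF_2^{\,3}$, and $f\big((a,b),v\big)=\diag(a,b,a+b)\,v$. Then $\det\diag(a,b,a+b)=ab(a+b)$ vanishes identically on $\FF_2^{\,2}$, so $\dim f(u,V)\le 2$ for every $u$ over $\FF_2$ (and likewise $\dim f(U,v)\le 2$); but over $\FF_4$, taking $b=\omega$ a generator of $\FF_4^\times$ gives $\det=\omega(1+\omega)=\omega\cdot\omega^2=1\ne 0$, so the hypothesis fails after extension. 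Thus your reduction does not cover fields with $|\FF|\le\min(m,n)+1$. Neumann's result is true over every field, but that residual case needs a different argument. (For the applications in this paper the base field has characteristic~$0$, hence is infinite, so your proof suffices there.)
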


\subsection*{Lie superalgebras and smash products}%\label{ssLSA}

We conclude that if a Delta set for a group or Lie algebra is of finite index or codimension then this strongly affects the structure of  the group algebra of the group or (restricted) enveloping algebra of the Lie algebra in question. This remains true when one considers color Lie superalgebras. However, in the case of a color Lie superalgebra $L=\bigoplus_{t\in T}L_t$, defined by an alternating bicharacter $\beta:T\times T\to\FF^\times$, one has to consider ``graded'' Delta sets as follows.  
\begin{definition}\label{dDeltas} For any $t,u\in T$, $m\in\NN$ one sets
\begin{enumerate}
\item $\Delta^m_{t,u}(L)=\{x\in L_t\,|\,\dim\, [x,L_u]\le m\}$;
\item $\Delta_t^m(L)=\bigcap_{u\in T}\Delta^m_{t,u}(L)$;
\item $\Delta_t(L)=\bigcup_{m\in\NN}\Delta^m_t(L)$;
\item $\Delta(L)=\bigoplus_{t\in T}\Delta_t(L)$.
\end{enumerate}
\end{definition}

Let $q\in\FF$ and consider a $q$-commutator $[a,b]_q=ab-qba$. An important result about PI-algebras is the following (see \cite[Theorem 4.2.3]{BMPZ}).
\begin{theorem}
Let $L=\bigoplus_{t\in T}L_t$ be a color  (restricted) Lie superalgebra over an arbitrary field $\FF$ and fix $t,u\in T$. Suppose that in $U(L)$ or $u(L)$ some nontrivial $q$-polynomial $f(x_1,\ld,x_n,y_1,\ld,y_n)$ where $q=\beta(u,t)$, satisfies
\[
f(x_1,\ld,x_n,y_1,\ld,y_n)=\sum_{\sigma\in\mathrm{Sym}(n)}\lambda_\sigma[x_1,y_{\sigma(1)}]_q\cdots[x_n,y_{\sigma(n)}]_q=0,\: \lambda_\sigma\in\FF,
\]
for any $x_1,\ld,x_n\in L_u$, $y_1,\ld,y_n\in L_t$. Then any $n$ elements in $L_t$ are linearly dependent modulo $\Delta^m_{t,u}(L)$.
\end{theorem}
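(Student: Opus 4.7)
The strategy is to argue the contrapositive: suppose there exist $y_1, \ld, y_n \in L_t$ that are linearly independent modulo $\Delta^m_{t,u}(L)$, where $m$ is a constant (polynomial in $n$) that will be fixed during the argument. The goal is to construct $x_1, \ld, x_n \in L_u$ for which $f(x_1, \ld, x_n, y_1, \ld, y_n) \neq 0$ in $U(L)$ (respectively in $u(L)$), contradicting the hypothesis that $f$ vanishes for all such substitutions.

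The setup uses the PBW theorem for color Lie superalgebras (and its restricted analogue for $u(L)$). Fix a homogeneous totally ordered $\FF$-basis $\{e_\alpha\}$ of $L$ refining a basis of $L_{u+t}$, so that the ordered monomials in the $e_\alpha$ form an $\FF$-basis of $U(L)$ (or $u(L)$). Observe that for $x \in L_u$ and $y \in L_t$, the $q$-commutator with $q = \beta(u,t)$ coincides with the color bracket,
\[
[x,y]_q \;=\; xy - \beta(u,t)\, yx \;=\; [x,y]_\beta \;\in\; L_{u+t}.
\]
Hence each summand $[x_1,y_{\sigma(1)}]_q\cdots[x_n,y_{\sigma(n)}]_q$ in $f$ is an ordinary associative product of $n$ elements of $L_{u+t}$, and non-vanishing can be detected on the PBW basis.

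The main step is an inductive construction of $x_1, \ld, x_n$ together with distinguished basis vectors $e_{\alpha_1}, \ld, e_{\alpha_n} \in L_{u+t}$ satisfying the \emph{triangular condition}: $[x_i, y_i]_\beta$ has a nonzero coefficient at $e_{\alpha_i}$, while $[x_i, y_j]_\beta$ has zero coefficient at $e_{\alpha_i}$ for every $j\neq i$. Having fixed $x_1,\ld,x_{k-1}$ and $e_{\alpha_1},\ld,e_{\alpha_{k-1}}$, let $W_{k-1}\subseteq L_{u+t}$ be the finite-dimensional subspace spanned by the basis vectors involved so far (of dimension bounded by a function of $n$). Since $y_k \notin \Delta^m_{t,u}(L) + \Span(y_1,\ld,y_{k-1})$, provided $m$ is chosen larger than $\dim W_{k-1}$ plus the number of "suppression" constraints, the bracket space $[L_u, y_k]_\beta$ is of dimension strictly greater than this bound. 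This leaves enough freedom to choose $x_k$ so that $[x_k,y_k]_\beta$ contains a fresh basis vector $e_{\alpha_k}\notin W_{k-1}$ with nonzero coefficient, while subtracting a suitable element of the preimage of $W_{k-1}$ kills the coefficient at each previously chosen $e_{\alpha_j}$ and at $e_{\alpha_k}$ in the brackets $[x_k,y_j]_\beta$ for $j<k$; a small linear-algebra lemma in the style of P.\,M.\,Neumann's Theorem~\ref{tPMN} controls the codimension of the admissible set.

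Finally, expand $f(x_1,\ld,x_n,y_1,\ld,y_n)=\sum_{\sigma}\lambda_\sigma[x_1,y_{\sigma(1)}]_\beta\cdots[x_n,y_{\sigma(n)}]_\beta$ via PBW. The triangular condition ensures that the PBW monomial obtained by ordering $e_{\alpha_1}, \ld, e_{\alpha_n}$ (each appearing exactly once) arises with nonzero coefficient from the term $\sigma=\id$ but from no other term, since any $\sigma\neq\id$ has some $i$ with $\sigma(i)\neq i$, and then $[x_i,y_{\sigma(i)}]_\beta$ contributes nothing at $e_{\alpha_i}$. After relabeling the indices so that $\lambda_{\id}\neq 0$ (permissible because $f$ is nontrivial multilinear), this forces $f(x,y)\neq 0$ in $U(L)$ (resp.\ $u(L)$), the desired contradiction. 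The hard part is the simultaneous bookkeeping in the inductive step — the dimension bound on $[L_u,y_k]_\beta$ must dominate both the "fresh basis vector" demand and the $O(k)$ linear "suppression" constraints on $x_k$ — and this bookkeeping is precisely what pins down the explicit (roughly quadratic in $n$) value of $m$. Since the argument only invokes the PBW basis, it is characteristic-free and applies uniformly to $U(L)$ and $u(L)$.
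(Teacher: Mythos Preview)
The paper does not supply its own proof of this statement; it is quoted verbatim as \cite[Theorem 4.2.3]{BMPZ} and used as a black box. Your sketch is precisely the strategy of that reference: pass to the contrapositive, note that $[x,y]_q=[x,y]_\beta\in L_{u+t}$ for $x\in L_u$, $y\in L_t$, build $x_1,\ld,x_n$ and fresh homogeneous basis vectors $e_{\alpha_1},\ld,e_{\alpha_n}\in L_{u+t}$ inductively so that a triangularity condition holds, and then read off the top-degree PBW coefficient of the ordered monomial $e_{\alpha_1}\cdots e_{\alpha_n}$ to see that only the $\sigma=\id$ term survives.

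Two small points worth tightening. First, your ``triangular condition'' as stated---$[x_i,y_j]_\beta$ has zero $e_{\alpha_i}$-coefficient for $j\ne i$---is not by itself enough to isolate the monomial $e_{\alpha_1}\cdots e_{\alpha_n}$; you also need the ``freshness'' condition that $e_{\alpha_k}$ does not occur in any $[x_i,y_j]_\beta$ with $i<k$. You do impose this (via $e_{\alpha_k}\notin W_{k-1}$), but $W_{k-1}$ must then be taken to contain \emph{all} basis vectors occurring in $[x_i,y_j]_\beta$ for $i<k$ and arbitrary $j$, not merely the previously selected $e_{\alpha_1},\ld,e_{\alpha_{k-1}}$. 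With both conditions in hand, the bijection forcing $\tau=\id$ (from freshness) and then $\sigma=\id$ (from triangularity) goes through cleanly in the associated graded, exactly as in \cite{BMPZ}. Second, at step $k$ the suppression constraints concern all $j\ne k$ (the $y_j$ are fixed from the outset), not only $j<k$; this affects the count that determines $m$ but not the structure of the argument.
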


Repeated application of this theorem together with Theorem \ref{tPMN} above on bilinear maps allows one to obtain an $L_+$-submodule $M\subset L_-$, with the desired properties in the case where $U(L)$ satisfies a polynomial identity and $\chr{\FF}=0$ (see Theorem \ref{tLSABW} as well as its proof in the last section). In this case also $U(L_+)$ is a prime algebra and so Posner's Theorem~\ref{tPosner} applies, which easily implies that $L_+$ must be abelian.

Also, using Theorem \ref{tH_inner_id} in the case of $U(L)\#\FF G$, with $x_1,\ld,x_n\in G$ together with additional Delta sets allows one to prove in Theorem \ref{tLSABW} the condition on the existence of the abelian subgroup $A$.

\subsection*{Delta sets in Hopf algebras}%\label{ssDSHA}

The definition of Delta sets in general Hopf algebras was given in \cite{BePa2}, as follows.

Let $H$ be a Hopf algebra over a field $\FF$ of arbitrary characteristic. 
Let us define the ``Delta set'' $H_{\mathrm{fin}}$ by setting
\begin{equation}\label{eHfin}
H_{\mathrm{fin}}=\{ x\in H\,|\,\dim_{\FF}(\ad\,H)(x)<\infty\}.
\end{equation}
Since the adjoint action is a measuring, $H_{\mathrm{fin}}$ is an $\FF$-subalgebra of $H$, but not necessarily a Hopf subalgebra.

If $T$ is a subset of $H$ which generates $H$ as an $\FF$-algebra then $x\in H_{\mathrm{fin}}$ if and only if $x$ is contained in an $\ad\,T$-stable finite-dimensional subspace of $H$.

The Delta sets $\hf$ are fairly closely related to the Delta sets defined earlier, for groups and Lie algebras.
\begin{proposition}[J. Bergen - D.S. Passman \cite{BePa2}\label{pBP}] The following are true:
\begin{enumerate}
\item If $H=\FF\, G$ is a group algebra of a group $G$ then $\hf=\FF\Delta(G)$;
\item If $L$ is a restricted Lie algebra over a field $\FF$ with $\chr{\FF}=p>0$ and $H=u(L)$ its restricted enveloping algebra then $\hf=u(\Delta(L))$;
\item Let $L$ be a Lie algebra over a field $\FF$ with $\chr{\FF}=0$, $H=U(L)$ its enveloping algebra and $\Delta_L$ the join of all finite-dimensional ideals of $L$. Then $\hf=U(\Delta_L)$.
\end{enumerate}

\end{proposition}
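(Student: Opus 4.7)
The plan is to treat the three cases with a common philosophy: since the adjoint action is an algebra homomorphism $\ad : H\to\End(H)$ and $H$ is generated as an algebra by its grouplike or primitive elements, finiteness of the $\ad H$-orbit of $y$ reduces to a finiteness condition on how $G$ or $L$ acts on $y$. For part (1), fix $x=\sum_{g\in G}\alpha_g g$. Since each $h\in G$ is grouplike with $S(h)=h^{-1}$, $(\ad h)(x)=hxh^{-1}$ is just conjugation, which permutes the basis $G$, so
\[
(\ad\FF G)(x)=\Span\{hxh^{-1}:h\in G\}=\bigoplus_{g\in\supp x}\FF\cdot g^G,
\]
using linear independence of $G$ in $\FF G$. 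This span is finite-dimensional iff every $g\in\supp x$ has a finite conjugacy class, iff $x\in\FF\Delta(G)$.

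For parts (2) and (3), each $\ell\in L$ is primitive in $H=u(L)$ or $U(L)$, so $\ad\ell=[\ell,-]$ acts as a derivation; since $L$ generates $H$ as an algebra, a subspace $W\subseteq H$ is $\ad H$-stable iff $[L,W]\subseteq W$. The inclusion $u(\Delta(L))\subseteq\hf$ (resp.\ $U(\Delta_L)\subseteq\hf$) follows: for $x_1,\ldots,x_n\in\Delta(L)$ (resp.\ $\Delta_L$), each $x_i$ lies in a finite-dimensional (restricted) Lie ideal $I_i$, and the finite-dimensional subspace $I_1I_2\cdots I_n\subseteq H$ is $\ad L$-stable by the Leibniz rule, hence $\ad H$-stable, so $x_1\cdots x_n\in\hf$.

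For the reverse inclusion, let $y\in\hf$ and set $W=(\ad H)(y)$, a finite-dimensional $\ad L$-stable subspace. I would pass to the PBW filtration on $H$: the associated graded $\gr H$ is a (possibly truncated, in the restricted case) symmetric algebra on $L$, and the symbol map is $\ad L$-equivariant on filtered pieces. By induction on the filtration degree of $y$, the symbol $\bar y$ lies in a finite-dimensional $\ad L$-submodule of $\gr H$, from which one extracts that the linear factors of the leading term of $y$ span a finite-dimensional (restricted) $L$-ideal. Subtracting an appropriate element of $U(\Delta_L)$ (resp.\ $u(\Delta(L))$) of the same leading order reduces the filtration degree, and one iterates. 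The main obstacle is precisely this reverse inclusion: one must carefully control the interaction between the $\ad L$-action and PBW supports, and in the restricted case further verify that the reconstructed ideal is automatically finite-dimensional and closed under the $p$-operation, which ultimately relies on the structural fact that, for a restricted Lie algebra, $\dim[x,L]<\infty$ forces $x$ into a finite-dimensional restricted ideal.
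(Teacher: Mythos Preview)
The paper does not prove this proposition at all: it is quoted from Bergen--Passman \cite{BePa2} and no argument is given. So there is nothing to compare your approach against; I will simply comment on the proposal itself.

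Your treatment of part~(1) is essentially sound, but the displayed equality
\[
(\ad\FF G)(x)=\bigoplus_{g\in\supp x}\FF\cdot g^G
\]
is not correct as stated: only the inclusion $\subseteq$ holds in general (take $x=g_1+g_2$ with $g_1$ central and $g_2$ not). You still get the ``if'' direction immediately, and the ``only if'' direction follows once you observe that the projection of $(\ad\FF G)(x)$ onto the span of a fixed conjugacy class $C$ equals $(\ad\FF G)(x_C)$, and that $(\ad\FF G)(x_C)$ is infinite-dimensional whenever $x_C\neq 0$ and $C$ is infinite (since conjugation is injective on supports, the supports of the $hx_Ch^{-1}$ sweep out all of $C$ without cancellation).

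For parts~(2) and~(3) there is a real gap in your ``easy'' inclusion. You assert that each $x_i\in\Delta(L)$ lies in a finite-dimensional (restricted) ideal $I_i$. For $\Delta_L$ in part~(3) that is true by definition, but for $\Delta(L)$ in part~(2) it is precisely the nontrivial structural fact you defer to the last sentence --- and you invoke it before establishing it. In fact $\dim[x,L]<\infty$ does \emph{not} obviously force $x$ into a finite-dimensional ideal: the $L$-submodule $\FF x+[L,x]+[L,[L,x]]+\cdots$ need not stabilise just because its first term is finite-dimensional. Bergen and Passman handle this point with genuine work in the restricted setting, and your sketch does not indicate how. Likewise, the reverse inclusion via PBW symbols is only a plan: the phrase ``the linear factors of the leading term of $y$ span a finite-dimensional $L$-ideal'' has no precise meaning for a general element of $S^n(L)$, and you have not said how to pass from a finite-dimensional $\ad L$-stable subspace of $\gr H$ to a statement about ideals of $L$. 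These are exactly the places where the original proof does substantive work.
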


The authors of \cite{BePa2} mention that $U(L)_{\mathrm{fin}}$ can be appreciably smaller than $U(\Delta(L))$. An explicit example is given in the case $\chr{\FF}=p>0$.

In \cite{MK} M. Kochetov studies Delta sets in the case of smash products $A\# H$, where $A$ is an $H$-module algebra, $H$ a Hopf algebra, under the condition that $A$ is generated by an $H$-submodule $V$, which generates $A$ as a unital algebra. This restriction is natural in all cases of pointed Hopf algebras considered here. He sets
\begin{eqnarray*}
\delta_H^m(V)&=&\{v\in V\,|\, \dim(H\cdot v)\le m\},\; \delta_H(V)=
\bigcup_m\delta_H^m(V),\\
\delta_V^m(H)&=&\{h\in H\,|\, \dim(h\cdot V)\le m\},\; \delta_V(H)=
\bigcup_m\delta_V^m(H).
\end{eqnarray*}

Some properties of these Delta sets are similar to those of Delta sets in the case of group algebras, enveloping algebras and their smash products. For instance,
\begin{enumerate}
\item For any $\alpha,\beta\in\FF$, $x\in\delta_H^i(V)$, $y\in\delta^j(V)$, one has $\alpha x+\beta y\in\delta_H^{i+j}(V)$;
\item All $\delta_H^m(V)$ are $H$-invariant sets;
\item If $\vp:V\to V$ is a homomorphism of $H$-modules then $\vp(\delta_H^m(V))\subset\delta_H^m(V)$;
\item If $\alpha,\beta\in\FF$, $h\in\delta_V^i(H)$, $k\in\delta_V^j(H)$ then $\alpha h+\beta k\in\delta_V^{i+j}(H)$;
\item All $\delta_V^m(H)$ are invariant under the left and the right multiplication by the elements of $H$.
\end{enumerate}
As before, $\delta_V(H)$ is a two-sided ideal in $H$ but not necessarily a Hopf ideal; $\delta_H(V)$ is an $H$-submodule in $V$.

Since Delta sets do not need to be subspaces, we give the following  definition. Suppose that $W$ is a subset in a vector space $V$. We say that $W$ has finite codimension in $V$ if there exist $v_1,\ld,v_m \in V$ such that $V = W +\Span\{v_1,\ld,v_m\}$ . If $m$
is the minimum possible integer with such property then we set $\dim V/W = m$.
We also introduce the notation $m\cdot W =\{ w_1+\cdots + w_m\;|\;w_i \in W\}$, $m \in \NN$. The following is \cite[Lemma 6.3]{BP}. 
\begin{lemma}\label{lDIM} Let $L$ be a vector space. Suppose that a subset $W$ is stable
under multiplication by scalars and such that $\dim_{\FF} L/W \le m$. Then $\Span_{\FF}{W}= 4^m\cdot W$.
\end{lemma}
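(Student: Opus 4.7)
The plan is to induct on $m$. The base case $m=0$ forces $L=W$, so $W$ equals the whole space, $\Span W = W = 4^0 \cdot W$, and there is nothing to prove. For the inductive step with $m\ge 1$, I would fix $v_1,\ldots,v_m$ witnessing $L = W + \Span\{v_1,\ldots,v_m\}$ and apply the inductive hypothesis to $W_1 = W + \FF v_1$. This set is still stable under scalar multiplication, and it satisfies $\dim L/W_1 \le m-1$ via the decomposition $L = W_1 + \Span\{v_2,\ldots,v_m\}$. The induction gives $\Span W_1 \subseteq 4^{m-1}\cdot W_1$, and a direct expansion $4^{m-1}\cdot(W + \FF v_1) = 4^{m-1}\cdot W + \FF v_1$ combined with $\Span W \subseteq \Span W_1$ yields $\Span W \subseteq 4^{m-1}\cdot W + \FF v_1$. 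Consequently, to obtain $\Span W \subseteq 4^m\cdot W$ it suffices to show $v_1 \in 3\cdot 4^{m-1}\cdot W$, since then $\FF v_1 \subseteq 3\cdot 4^{m-1}\cdot W$ and $4^{m-1}\cdot W + 3\cdot 4^{m-1}\cdot W = 4^m\cdot W$.

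The easy subcase is $v_1 \notin \Span W$: writing any $x \in \Span W$ as $x = y + c v_1$ with $y \in 4^{m-1}\cdot W$, the difference $c v_1 = x - y$ lies in the subspace $\Span W$, which forces $c = 0$ and so $\Span W \subseteq 4^{m-1}\cdot W$ directly. The main obstacle is the complementary case $v_1 \in \Span W$: here the inclusion $\Span W \subseteq 4^{m-1}\cdot W + \FF v_1$ becomes an equality (both $4^{m-1}\cdot W$ and $\FF v_1$ sit inside the subspace $\Span W$), but the inductive hypothesis gives no a priori bound on how many $W$-summands are required to express the specific element $v_1$.

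My plan to clear this obstacle is a dichotomy on whether the set $4^{m-1}\cdot W$ happens to be closed under addition. If it is, then $4^{m-1}\cdot W$ is a subspace containing $W$, hence contains the smallest such subspace $\Span W$, yielding the even stronger conclusion $\Span W \subseteq 4^{m-1}\cdot W$. If it is not, then I can pick $y_1, y_2 \in 4^{m-1}\cdot W$ with $y_1 + y_2 \notin 4^{m-1}\cdot W$; the inclusion $\Span W \subseteq 4^{m-1}\cdot W + \FF v_1$ lets me write $y_1 + y_2 = y + c v_1$ with $y \in 4^{m-1}\cdot W$ and $c \in \FF$, and the hypothesis $y_1 + y_2 \notin 4^{m-1}\cdot W$ rules out $c = 0$. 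Rearranging gives $v_1 = (y_1 + y_2 - y)/c$, and since the set $4^{m-1}\cdot W$ is itself closed under scalar multiplication (in particular by $-1$ and by $1/c$), each of $y_1$, $y_2$, and $-y$ contributes at most $4^{m-1}$ summands from $W$ to this expression, exhibiting $v_1$ as a sum of at most $3\cdot 4^{m-1}$ elements of $W$ and completing the induction.
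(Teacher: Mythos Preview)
The paper does not give its own proof of this lemma; it merely quotes it as \cite[Lemma~6.3]{BP}. Your argument is correct and self-contained: the induction on $m$, the passage to $W_1 = W + \FF v_1$, and the dichotomy on whether $4^{m-1}\cdot W$ is already additively closed all work as written, and the arithmetic $4^{m-1} + 3\cdot 4^{m-1} = 4^m$ is exactly what produces the stated bound. One trivial inclusion you leave implicit is $4^m\cdot W \subseteq \Span W$, but that is immediate.
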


Using the generating set $V$ for $A$ one can consider the associated graded algebra $\gr\,A$ defined by the degree filtration $\{A_m\}_{m=0}^\infty$ of $A$ with respect to $V$. Here $A_0=\FF 1$ and, for $m>0$, we have $A_m=A_{m-1}+V^m$. Then  
\[
\gr A=\bigoplus\gr_mA,\mbox{ where }\gr_mA=A_m/A_{m-1}.
\]
All information about the action of $H$ on $A$ is contained in the action of $H$ on $\gr\,A$. Let us denote by $S(V)$ the symmetric algebra of $V$, $\Lambda(V)$ the Grassman algebra of $V$ and, if $\chr{\FF}=p>0$, $V^{[p]}$ will stand for the subspace in $S(V)$ spanned by all $v^p$, where $v\in V$. We quote the following result of M. Kochetov \cite[Proposition~2.4]{MK}.

\begin{theorem}\label{tMKfil}
Let $A$ be an $H$-module algebra generated by an $H$-submodule $V$ as a unital algebra. Assume that the associated graded algebra $\gr A$ is isomorphic to one of $S(V)$, $\Lambda(V)$ or $S(V)/\mathrm{ideal}(V^{[p]})$. If the polynomial identity (\ref{eH_inner_id}) holds for any $h_1,\ld,h_n\in H$ and $y_1,\ld,y_n\in V$ then
\begin{enumerate}
\item $\dim V/\delta_H^{n^2}(V)<n$;
\item $\dim H/\delta_V^{n^2}(H)<n$.
\end{enumerate}
\end{theorem}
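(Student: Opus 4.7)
The plan is to project the identity \eqref{eH_inner_id} into the top-degree component of the associated graded algebra $\gr A$ and turn it into a linear-algebra constraint. The key initial observation is that in the smash product $A\#H$, for $h\in H$ and $y\in V\subset A$, one has $(\ad\,h)(y)=\sum h_1\,y\,S(h_2)=h\cdot y\in V$, since $V$ is an $H$-submodule. Substituting into \eqref{eH_inner_id}, the hypothesis becomes
\[
\sum_{\sigma\in\mathrm{Sym}(n)}\lambda_\sigma\,(h_1\cdot y_{\sigma(1)})(h_2\cdot y_{\sigma(2)})\cdots(h_n\cdot y_{\sigma(n)})=0\quad\text{in }A,
\]
for all $h_i\in H$, $y_j\in V$. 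Each summand lies in $V^n+A_{n-1}$, so passing to $\gr_n A$ we obtain the same relation inside $S^n(V)$, $\Lambda^n(V)$, or the degree-$n$ part of $S(V)/\mathrm{ideal}(V^{[p]})$. In all three cases multiplication on degree~$n$ is commutative (or sign-commutative), which is what makes the analysis tractable.

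For part (1), I would proceed by contraposition. Assume $\dim V/\delta_H^{n^2}(V)\ge n$ and try to build $y_1,\dots,y_n\in V$ and $h_1,\dots,h_n\in H$ for which the projected identity fails. Using the codimension hypothesis together with Lemma~\ref{lDIM} (applied to $W=\delta_H^{n^2}(V)$, which is closed under scalar multiplication), one extracts $y_1,\dots,y_n\in V$ that are linearly independent modulo $\Span\,\delta_H^{n^2}(V)$ and, crucially, such that $\dim(H\cdot y_j)>n^2$ for each $j$. Then the matrix entries $v_{ij}:=h_i\cdot y_j$ range over $H\cdot y_j$, a space of dimension exceeding $n^2$, so by an inductive/pigeonhole choice of $h_1,h_2,\dots,h_n$ one can arrange the $n^2$ vectors $v_{ij}$ to be linearly independent in $V$. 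In the (anti)commutative algebra $\gr_n A$, the resulting weighted ``permanent'' $\sum_\sigma \lambda_\sigma\,v_{1,\sigma(1)}\cdots v_{n,\sigma(n)}$ is then nonzero — in $S^n(V)$ because distinct matchings produce distinct monomials, and in $\Lambda^n(V)$ or the $p$-truncated version by the analogous determinantal/anti-symmetrization argument — contradicting the identity.

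Part (2) runs by the same template with the roles of $H$ and $V$ reversed: if $\dim H/\delta_V^{n^2}(H)\ge n$, pick $h_1,\dots,h_n\in H$ with $\dim(h_j\cdot V)>n^2$ and linearly independent modulo $\Span\,\delta_V^{n^2}(H)$, then inductively choose $y_1,\dots,y_n\in V$ so that the same matrix $(h_i\cdot y_j)$ has independent entries, again forcing the projected identity to fail in $\gr_n A$.

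The main obstacle is the inductive construction of the $h_i$'s (respectively $y_j$'s) so that the $v_{ij}$ are independent enough for the weighted symmetrized sum in $\gr_n A$ to be provably nonzero. The dimension bound $>n^2$ in the definition of $\delta_H^{n^2}(V)$ is tight for this step: it gives exactly enough room to choose, at each stage, an element whose image avoids the span of at most $n^2-1$ previously chosen values. Secondary care is needed to unify the three models for $\gr A$: the symmetric case is cleanest, while $\Lambda(V)$ requires tracking signs and the $S(V)/\mathrm{ideal}(V^{[p]})$ case requires ensuring that no multiplicity in the monomials reaches $p$, which is automatic since each $y_j$ appears exactly once in each term of $f$.
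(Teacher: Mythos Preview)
The paper does not give its own proof of this statement: it is quoted verbatim as \cite[Proposition~2.4]{MK} (Kochetov), and the only remark the paper makes about the argument is that ``the proof only uses the PBW-bases for $A$ in question'' (Section~\ref{sPT}). So there is no in-paper proof to compare against; your outline is in fact a reconstruction of Kochetov's argument, and its overall shape---reduce to $\gr_n A$, use (anti)commutativity of the PBW model $S(V)$, $\Lambda(V)$, or $S(V)/\mathrm{ideal}(V^{[p]})$ to turn the identity into a ``weighted permanent'' in the matrix $(h_i\cdot y_j)$, then violate it by a judicious choice of $h_i$'s and $y_j$'s---is exactly the standard route.

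One point deserves more care than you give it. You assert that from $\dim(H\cdot y_j)>n^2$ for each $j$ one can ``by an inductive/pigeonhole choice'' make all $n^2$ vectors $v_{ij}=h_i\cdot y_j$ linearly independent. At stage $i$ you are choosing a \emph{single} $h_i$, and you need it to simultaneously send each of $y_1,\ldots,y_n$ outside a growing span; the bound $>n^2$ controls each map $h\mapsto h\cdot y_j$ separately, not their joint behavior. The usual way around this is not to force full independence of the $n^2$ entries but to exploit the stronger property your construction of the $y_j$'s actually yields: since $\dim V/\delta_H^{n^2}(V)\ge n$, you can pick $y_1,\ldots,y_n$ inductively so that every nontrivial combination $\sum c_jy_j$ lies outside $\delta_H^{n^2}(V)$, and then argue via a leading-term (PBW) analysis that a single term of the weighted sum survives. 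Your proposal is on the right track and you correctly flag this as the main obstacle, but the sentence ``exactly enough room to choose, at each stage, an element whose image avoids the span of at most $n^2-1$ previously chosen values'' undersells what has to be checked.
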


If $H=\FF G$ is a group algebra and $A=U(L)$ where $G$ is a group and $L$ is a color Lie superalgebra then this theorem can be used to obtain Theorem \ref{tLSABW}.
Details are in the last section, making the observation that the theorem generalizes to color Lie superalgebras $A$.

\section{Identities and Delta sets in Radford's algebra $F_{(q)}$}\label{sFQ}

A very particular case  of algebras described in the previous section are the Hopf algebras $F_{(q)}$ introduced by D.~Radford~\cite{Rad}. The algebra $H=F_{(q)}$, $q\in\FF$, is generated by one group-like element $a$ and one skew-primitive element $x$ such that 
\begin{equation}\label{eQQ}
ax=qxa.
\end{equation}  
The algebras $F_{(q)}$ are standard 2-generator subalgebras in the pointed Hopf algebra $U(\cD,\lambda)$ introduced in the next section. In this section we study polynomial identities and Delta sets in $F_{(q)}$.

\subsection*{Polynomial identities in $F_{(q)}$}%\label{ssIFQ}

\begin{theorem}\label{pPIFQ}
The algebra $H=F_{(q)}$ satisfies a nontrivial polynomial identity if and only if $q$ is a root of 1. If $q$ is a primitive $n$th root of unity, then $H=F_{(q)}$ satisfies the standard identity 
\[
S_{2n}(x_1,\ld,x_{2n})=\sum_{\sigma\in\mathrm{Sym}(2n)}\sgn(\sigma)x_{\sigma(1)}\cdots x_{\sigma(2n)}=0,
\]
and $2n$ is the minimum degree of identities in $F_{(q)}$.
\end{theorem}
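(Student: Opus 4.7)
The plan is to apply Posner's Theorem~\ref{tPosner} to $F_{(q)}$, which is the iterated Ore extension $\FF[a^{\pm 1}][x;\sigma]$ with $\sigma(a) = q^{-1}a$ (forced by $ax = qxa$), hence a Noetherian domain and in particular a prime algebra. Using the PBW-type basis $\{a^i x^j : i \in \ZZ,\: j \ge 0\}$, for $f = \sum c_{ij} a^i x^j$ one has
\[
fa - af = \sum c_{ij}(q^{-j} - 1)\, a^{i+1} x^j, \qquad xf - fx = \sum c_{ij}(q^{-i} - 1)\, a^i x^{j+1},
\]
so $f \in Z(F_{(q)})$ exactly when $c_{ij} = 0$ whenever $q^i \ne 1$ or $q^j \ne 1$.

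If $q$ has infinite order, this forces $Z(F_{(q)}) = \FF$; were $F_{(q)}$ PI, Posner's theorem would apply with $Q = \FF$, making $F_{(q)}$ itself central simple of finite dimension over $\FF$, contradicting the infinite PBW basis. Hence $F_{(q)}$ admits no nontrivial polynomial identity unless $q$ is a root of unity.

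If $q$ is a primitive $n$-th root of unity, the same computation yields $Z(F_{(q)}) = \FF[a^{\pm n}, x^n]$ with free basis $\{a^i x^j : 0 \le i, j < n\}$ for $F_{(q)}$ over its centre; the centre of $F_{(q)}^Q$ is then $Q$ and $F_{(q)}^Q$ is a central simple $Q$-algebra of degree $n$. Extending scalars to a splitting field gives $M_n$, so the identities of $F_{(q)}$ include $S_{2n}$ by Amitsur--Levitzki. For the lower bound, I would construct the $n$-dimensional representation $V = \bigoplus_{i=0}^{n-1} \FF e_i$ by $a\cdot e_i = q^i e_i$ and $x\cdot e_i = e_{(i+1)\bmod n}$; the relation $ax = qxa$ holds at $i=n-1$ because $q^n = 1$. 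Since $\rho(a)$ has the $n$ distinct eigenvalues $1, q, \ldots, q^{n-1}$, each diagonal matrix unit $E_{ii}$ arises by Lagrange interpolation as a polynomial in $\rho(a)$, and combining with $\rho(x)^k e_j = e_{(j+k)\bmod n}$ gives $E_{ii}\rho(x)^{i-j}E_{jj} = E_{ij}$, so $\rho$ surjects onto $M_n(\FF)$; Amitsur--Levitzki then forbids any identity of degree less than $2n$, so $2n$ is the minimum. The only genuine bookkeeping lies in the centre computation and in the surjectivity of $\rho$; the rest is a formal application of Posner's and Amitsur--Levitzki's theorems.
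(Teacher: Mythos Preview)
Your proof is correct but follows a different line from the paper's. The paper works entirely via modules and the Density Theorem: for $q$ of infinite order it exhibits an explicit infinite-dimensional irreducible $F_{(q)}$-module (with $a$ acting as a shift and $x$ as a $q$-weighted shift) whose centralizer is $\FF$, so that every $M_m(\FF)$ occurs in a quotient; for $q$ a primitive $n$th root it notes that $F_{(q)}$ is free of rank~$n$ over the \emph{commutative} (not central) subalgebra $\FF[a,x^n]$, embeds $F_{(q)}\hookrightarrow M_n(\FF[a,x^n])$ by the right regular action to obtain $S_{2n}$, and then constructs an $n$-dimensional irreducible module to invoke Density again for the lower bound. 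You instead compute the centre exactly and hand everything to Posner's Theorem: infinite order forces $Z(F_{(q)})=\FF$, so a prime PI domain would be finite-dimensional; a primitive $n$th root gives $Z(F_{(q)})=\FF[a^{\pm n},x^n]$ with $F_{(q)}$ free of rank~$n^2$ over it, hence PI, and Posner then identifies the identities of $F_{(q)}$ with those of $M_n$ over a field, yielding both $S_{2n}$ and the sharpness in one stroke. Your explicit surjection onto $M_n(\FF)$ via Lagrange interpolation in $\rho(a)$ is thus technically redundant once Posner is in play, but it gives an independent and pleasantly concrete confirmation of the lower bound. The trade-off: your argument is more structural and economical, at the cost of invoking the full strength of Posner; the paper's is more hands-on and uses only the Density Theorem and Amitsur--Levitzki.
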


\begin{proof}
In \cite[Section 4.1.6]{BMPZ}, the authors construct an irreducible module for a color polynomial algebra in two variables. Similarly, one should consider here a vector space $V$ with basis $\{ v_i\vert i\in\ZZ\}$. We will make $V$ an $F_{(q)}$-module by setting
\begin{equation}\label{eV}
a\circ v_i=v_{i-1},\; x\circ v_i=q^iv_{i+1}\mbox{ for any }i\in\ZZ.
\end{equation}
\begin{proposition}\label{pIMFQ}
If $q$ is not a root of 1, then $V$ is an irreducible $F_{(q)}-$module whose centralizer is $\FF\,\id_V$.
\end{proposition}

\begin{proof}
Let $W$ be a nonzero submodule in $V$. If some $v_i$ is in  $W$ then clearly $W=V$.  Otherwise, let us take a nonzero element $w$ in $W$, which has the form
\[
w=\sum_{i=0}^m \mu_iv_{t_i},\mbox{ where }0\ne \mu_i\in\FF,\;m\ge 0,\;t_i\ne t_j, \mbox{ if }i\ne j.
\]
We apply induction on $m$ starting with $m=0$ to show that $W=V$. Indeed, let us consider
\[ 
(ax)\circ w=\sum_{i=0}^m q^{t_i}\mu_iv_{t_i}\mbox{ for all }0\le s\le m.
\]
Subtracting this element from $q^{t_m}w$, we obtain
\[
q^{t_m}w-(ax)\circ w=\sum_{i=0}^{m-1} \mu_i(q^{t_m}-q^{t_i})v_{t_i}\in W.
\]
Since all coefficients in this new element are nonzero, induction applies and hence $V$ is irreducible.
If $\vp$ is in the centralizer of this module then 
\[
\vp(a\circ v_i)=a\circ(\vp(v_i)\mbox{ and }\vp(x\circ v_i)=x\circ(\vp(v_i)\mbox{ for all }i\in\ZZ.
\]
If $\vp(v_i)=\sum_{j\in\ZZ}t_i^jv_j$ then we have, for all $i,j$, that $t_i^j=t_{i-1}^{j-1}$ and $q^jt_i^j=q^it_{i+1}^{j+1}$. Since $q^{j-i}=1$ if and only if $j=i$ we have $\vp(v_i)=t^i_iv_i$. But also all $t^i_i$ are equal, and so $\vp=\lambda\,\id_V$, for some $\lambda\in\FF$. Thus the centralizer of $F_{(q)}$-module $V$ is $\FF\,\id_V$, as claimed.

\end{proof}

Note that, in a quite similar way to the proof of Proposition \ref{pIMFQ}, if $q$ is a primitive $n$th root of unity, one can construct an irreducible module $V_n$ for $F_{(q)}$, whose dimension is $n$. One simply has to take $V_n=\langle  v_i\vert i\in\ZZ_n\rangle$ and use the same formulas (\ref{eV}), but view addition modulo $n$. The same proof shows that the following is true.

\begin{proposition}\label{pIM_nFQ}
If $q$ is a primitive $n$th root of 1, then $V_n$ is an irreducible $F_{(q)}$-module of dimension $n$. The centralizer of $V_n$ is $\FF\,\id_{V_n}$.
\hfill$\Box$
\end{proposition}

Continuing with the proof of the theorem, let us assume that $q$ is not root of 1. Since $H=F_{(q)}$ has an irreducible module of infinite dimension, by the Density Theorem, for any $n$, a homomorphic image of $F_{(q)}$ contains a subalgebra isomorphic to the matrix algebra $M_n(\FF)$ of order $n$. By Amitsur-Levitzky's Theorem, the minimum degree of identities satisfied by $M_n(\FF)$ equals $2n$. As a result, $F_{(q)}$ is not a PI-algebra.

If $q$ is a primitive $n$th root of 1, then it follows from (\ref{eQQ}) that both $a^n$ and $x^n$ are central elements in $H=F_{(q)}$. Moreover, it follows from \cite{Rad} (PBW-basis for $F_{(q)}$) that  $H$ is a free  left module of rank $n$ over the commutative subalgebra $K=\FF[a,x^n]$. (One could also take $K=\FF[a^n,x]$, etc.) Let $r_u$ denote right multiplication in $H$ by $u\in F_{(q)}$, an endomorphism of the free left $K$-module $H$. Since $1\cdot r_u=u$ and $r_{u_1u_2}=r_{u_1}r_{u_2}$, the mapping $u\mapsto r_u$ is an injective homomorphism of $H$ to $\End_K(H)$. Thus $H$ embeds in the matrix algebra $M_{n}(K)$ and so satisfies a standard polynomial identity of degree $2n$.

On the other hand, since $H$ has an irreducible module $V_n$, by the Density Theorem its homomorphic image is isomorphic to $M_n(\FF)$, hence by  Amitsur-Levitzky's Theorem $F_{(q)}$ cannot satisfy an identity of degree  less than $2n$.
 \end{proof}

\begin{corollary} \label{cq_root}
If $q$ is a primitive $n$th root of 1, then $F_{(q)}$ satisfies an identity of degree $2n$ but no identity of degree less than $2n$.
\end{corollary}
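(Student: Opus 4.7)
The statement of the corollary is literally the conclusion of the second half of Theorem~\ref{pPIFQ}, which has just been proved, so the plan is essentially to invoke that theorem. Nevertheless, for the sake of a self-contained sketch I would split the corollary into its two assertions (upper bound $2n$ and lower bound $2n$) and argue them independently, recycling the two halves of the theorem's proof.

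For the upper bound I would proceed as follows. Because $q^n=1$, the defining relation $ax=qxa$ implies that $a^n$ and $x^n$ are central in $F_{(q)}$. Invoking Radford's PBW-type basis for $F_{(q)}$, one sees that $K:=\FF[a,x^n]$ is a commutative subalgebra and $F_{(q)}$ is a free left $K$-module of rank $n$, with basis $\{1,x,x^2,\dots,x^{n-1}\}$. The right regular representation therefore embeds $F_{(q)}$ into $\End_K(F_{(q)})\cong M_n(K)\cong M_n(\FF)\otimes_\FF K$. By the Amitsur--Levitzki theorem $M_n(\FF)$ satisfies $S_{2n}=0$, and by Regev's Theorem~\ref{tAR} the tensor product $M_n(\FF)\otimes_\FF K$ remains PI with $S_{2n}=0$ holding there too; hence $F_{(q)}$ satisfies the standard identity of degree $2n$.

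For the lower bound I would appeal to Proposition~\ref{pIM_nFQ}, which produces an irreducible $F_{(q)}$-module $V_n$ of dimension $n$ with centralizer $\FF\cdot\id_{V_n}$. By the Jacobson Density Theorem the structure map $F_{(q)}\to\End_\FF(V_n)\cong M_n(\FF)$ is surjective, so every polynomial identity satisfied by $F_{(q)}$ is inherited by $M_n(\FF)$. The sharp part of Amitsur--Levitzki states that $M_n(\FF)$ satisfies no nontrivial identity of degree strictly less than $2n$, so neither does $F_{(q)}$.

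There is no genuine obstacle here; the only external input that does real work is Radford's PBW description of $F_{(q)}$, which provides the free $K$-module structure used in the embedding into $M_n(K)$. Everything else is a bookkeeping combination of Amitsur--Levitzki (both its existence and its sharpness), Regev's tensor product theorem, and the Density Theorem, all already used above in Theorem~\ref{pPIFQ}.
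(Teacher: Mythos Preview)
Your proposal is correct and mirrors the paper's approach exactly: the corollary is an immediate restatement of the second half of Theorem~\ref{pPIFQ}, and your sketch of the upper and lower bounds recapitulates precisely the two paragraphs of that theorem's proof (embedding into $M_n(K)$ via the PBW basis for the upper bound, and the irreducible module $V_n$ plus density for the lower bound). One cosmetic point: you do not actually need Regev's theorem for the upper bound, since $K$ is commutative and Amitsur--Levitzki already gives $S_{2n}=0$ in $M_n(K)$ directly.
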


\subsection*{Delta sets in $F_{(q)}$}%\label{ssDSFQ}

Let $\FF$ be an arbitrary field, $q\in \FF^{\times}$.
Let $H= F_{(q)}$ as above (see~\cite[\S1.3]{Rad}). 
Note that  
\[    
  \Delta(x) = x\otimes a + 1\otimes x \ \ \mbox{ and } \ \ S(x)= - x a^{-1} . 
\]

Recall the definition of Delta sets from Section~\ref{sDS},
under the adjoint action of $H$ on itself:  
\[
    \delta^i_H(H) = \{ h\in H\mid \dim_{\FF}(\ad H)(h)\leq i \}
\]
and 
\[
   H_{{\rm fin}} = \bigcup_{i=0}^{\infty} \delta^i_H(H) = \{ h\in H \mid
      \dim_{\FF} (\ad H)(h) < \infty \}. 
\]

We will prove that $H_{{\rm fin}} = H$ if, and only if, $q$ is a root of unity,
and that if $q$ is a primitive $n$th root of unity, then $\delta^n_H(H) = H$. 
First we need a calculational lemma.

\begin{lemma}\label{lad}
For all integers $i\geq 0$ and all integers $j$, 
\begin{itemize}
\item[(i)] $\displaystyle{  (\ad x^i)(a^j) = \prod_{l=0}^{i-1} (1-q^{j-l}) x^i a^{j-i}}$ 
  and    $(\ad a^i)(a^j) = a^j$, 
\item[(ii)] $(\ad x^i)(x^j) = 0$  and $(\ad a^i) (x^j) = q^{ij}x^j$.
\end{itemize}
\end{lemma}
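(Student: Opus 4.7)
The plan is to use the fact that, for any Hopf algebra, the map $\ad \colon H \to \End(H)$ is an algebra homomorphism: applying $\Delta(hk) = \sum h_{(1)} k_{(1)} \otimes h_{(2)} k_{(2)}$ together with the anti-multiplicativity of $S$ yields $\ad(hk) = \ad(h) \circ \ad(k)$. Consequently $\ad(a^i) = (\ad a)^i$ and $\ad(x^i) = (\ad x)^i$, so everything reduces to iterating the two single-step formulas
\[
    (\ad a)(y) = a y a^{-1}, \qquad (\ad x)(y) = xy a^{-1} - yx a^{-1},
\]
which one reads directly off $\Delta(a)=a\otimes a$, $\Delta(x)=x\otimes a+1\otimes x$, $S(a)=a^{-1}$, and $S(x)=-xa^{-1}$ (note that $a$ is invertible because it is grouplike).

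Three of the four formulas are then immediate. Iterating $ax = qxa$ gives $a^i x^j = q^{ij} x^j a^i$ for all $i,j\in\ZZ$, so $(\ad a^i)(a^j) = a^j$ and $(\ad a^i)(x^j) = a^i x^j a^{-i} = q^{ij} x^j$. For the vanishing in (ii), a one-line computation gives $(\ad x)(x^j) = x \cdot x^j a^{-1} - x^j \cdot x a^{-1} = 0$, whence $(\ad x^i)(x^j) = 0$ for every $i \geq 1$.

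The only substantive formula is $(\ad x^i)(a^j) = \prod_{l=0}^{i-1}(1 - q^{j-l})\, x^i a^{j-i}$, and I would prove it by induction on $i$. The base case $i=0$ is the empty product. For the inductive step, the hypothesis writes $(\ad x)^i(a^j)$ as a scalar multiple of $x^i a^{j-i}$, so it is enough to compute
\[
    (\ad x)\bigl( x^i a^{j-i} \bigr) \;=\; x^{i+1} a^{j-i-1} \;-\; x^i \bigl( a^{j-i} x \bigr) a^{-1} \;=\; (1 - q^{j-i})\, x^{i+1} a^{j-i-1},
\]
where the last equality uses $a^{j-i} x = q^{j-i} x a^{j-i}$. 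Multiplying by the inductive constant produces exactly one additional factor $(1 - q^{j-i})$ and shifts the $a$-exponent by $-1$, yielding the claimed formula for index $i+1$. There is no real obstacle; the only point requiring care is bookkeeping the exponent of $q$ picked up each time an $a$ is commuted past an $x$.
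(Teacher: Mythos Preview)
Your proof is correct and follows essentially the same approach as the paper: both arguments use the multiplicativity of $\ad$ to reduce to the single-step operators $\ad a$ and $\ad x$, dispatch three of the formulas by direct inspection, and prove the remaining formula $(\ad x^i)(a^j)=\prod_{l=0}^{i-1}(1-q^{j-l})\,x^i a^{j-i}$ by induction on $i$, the inductive step being the commutation $a^{j-i}x=q^{j-i}xa^{j-i}$. The only cosmetic difference is that you start the induction at $i=0$ with the empty product, while the paper starts at $i=1$.
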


\begin{proof}
We will prove the first formula in part (i) by induction on $i$.
First note that 
\[
   (\ad x)(a^j) = xa^j a^{-1} + a^j S(x)
   = xa^{j-1} - a^j x a^{-1} = (1-q^j) x a^{j-1} . 
\]
Next assume that 
\[
    (\ad x^{i-1})(a^j)  = \prod_{l=0}^{i-2} (1-q^{j-l}) x^ia^{j-i}
\]
for some $i\geq 2$.
Then 
\begin{eqnarray*}
   (\ad x^i) (a^j) & = & (\ad x) (\ad x^{i-1})(a^j) \\ & = &
    \prod_{l=0}^{i-2} (1-q^{j-l}) xx^{i-1} a^{j-i+1} a^{-1}  
    - x^{i-1} a^{j-i+1} x a^{-1}) \\
   & = & \prod_{l=0}^{i-1} (1-q^{j-l}) (x^i a^{j-i}) .
\end{eqnarray*}
Therefore the first formula in part (i) holds. 
Clearly the second formula also holds. 

Note that $(\ad x)(x^j) = xx^ja^{-1}  - x^j x a^{-1} = 0$.
Thus the first formula in part (ii) holds.
The second is straightforward.
\end{proof}

\begin{theorem}
Let $H= F_{(q)}$. Then 
$H_{{\rm fin}} = H$ if, and only if, $q$ is a root of unity.
Moreover, if $q$ is a primitive $n$th root of unity, 
then $\delta^n_H(H) = H$. 
\end{theorem}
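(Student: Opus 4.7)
\emph{First direction.} When $q$ is not a root of unity, my plan is to show $a^{-1}\notin H_{\mathrm{fin}}$. Applying Lemma~\ref{lad}(i) with $j=-1$ gives
\[
(\ad x^i)(a^{-1}) = \prod_{l=0}^{i-1}(1-q^{-1-l})\, x^i a^{-1-i};
\]
every factor is nonzero since $-1-l$ is a nonzero integer and $q^m\neq 1$ for $m\neq 0$. The monomials $\{x^i a^{-1-i} : i \ge 0\}$ are linearly independent by the PBW basis of $F_{(q)}$, so $(\ad H)(a^{-1})$ is infinite-dimensional.

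\emph{Second direction.} Suppose $q$ is a primitive $n$th root of unity. The plan is to realize $\ad(H) \subseteq \End_{\FF}(H)$ as a finite-dimensional subalgebra by verifying three relations. First, $\ad(a)^n = \id$, since $a^n$ is central and grouplike, so conjugation by $a^n$ acts trivially. Second, $\ad(x)^n = 0$ on $H$: by Lemma~\ref{lad}(i), $(\ad x^n)(x^i a^j) = \prod_{l=0}^{n-1}(1 - q^{j-l})\, x^{i+n} a^{j-n}$, and among the $n$ consecutive integers $j, j-1, \ldots, j-n+1$ one is divisible by $n$, forcing a vanishing factor. Third, $\ad(a)\ad(x) = q\,\ad(x)\ad(a)$, since $\ad$ is an algebra homomorphism and $ax = qxa$. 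These relations together show $\ad(H)$ is spanned by $\{\ad(x)^k \ad(a)^c : 0 \le k, c \le n-1\}$, so $\dim \ad(H) \le n^2 < \infty$; in particular $\dim_{\FF}(\ad H)(h) \le n^2$ for every $h \in H$, which already establishes $H_{\mathrm{fin}} = H$.

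\emph{Refinement to $\delta^n_H(H) = H$.} To sharpen the bound from $n^2$ to $n$, my plan is to decompose $h = \sum_{r=0}^{n-1}h_r$ into $\ad(a)$-eigencomponents of eigenvalue $q^r$ (recoverable from $\{\ad(a^c)(h)\}_{c=0}^{n-1}$ by Vandermonde inversion) and analyze each cyclic orbit separately. Since $\ad(x)$ shifts the eigen-index cyclically and is nilpotent of index $n$, the full orbit $(\ad H)(h)$ is spanned by the iterates $\{\ad(x)^k(h_r) : 0 \le r, k \le n-1\}$. For a single monomial $x^i a^j$, Lemma~\ref{lad}(i) immediately yields the bound $n$. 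The main obstacle is the corresponding analysis when $h_r$ is itself a linear combination of many monomials: a naive estimate only gives $n^2$, and tightening to $n$ requires a consolidation argument exploiting the precise vanishing pattern of the coefficients $\prod(1-q^{j-l})$ in Lemma~\ref{lad}(i) together with the $q$-commutation of $\ad(a)$ and $\ad(x)$. Carrying this out cleanly is the delicate technical step on which the sharp bound stands or falls.
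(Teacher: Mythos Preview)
Your argument for the equivalence $H_{\rm fin}=H \Leftrightarrow q$ is a root of unity is complete and correct. In the first direction your choice of $a^{-1}$ is in fact sharper than the paper's choice of $a$: the paper writes $(\ad x^i)(a)=\prod_{l=0}^{i-1}(1-q^{1-l})\,x^ia^{1-i}$ and concludes $a\notin H_{\rm fin}$, but that product contains the factor $1-q^{0}=0$ as soon as $i\ge 2$, so $a$ actually lies in $H_{\rm fin}$ for every $q$; your exponent $j=-1$ avoids this. In the second direction your three relations $\ad(a)^n=\id$, $\ad(x)^n=0$, $\ad(a)\ad(x)=q\,\ad(x)\ad(a)$ give a clean global bound $\dim_{\FF}\ad(H)\le n^2$, whereas the paper works monomial by monomial via Lemma~\ref{lad} and the multiplicativity of $\ad$. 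Both routes yield $H_{\rm fin}=H$.

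The refinement to $\delta^n_H(H)=H$, however, cannot be completed, because the statement is false. Take $n=2$, $q=-1$, and $h=a+xa$. The $\ad(a)$-eigencomponents are $h_0=a$ and $h_1=xa$; from $\ad(x)(a)=2x$ and $\ad(x)(xa)=2x^2$ one finds $(\ad H)(h)=\operatorname{span}\{a,\,xa,\,x,\,x^2\}$, which is $4$-dimensional, so $h\notin\delta^2_H(H)$. (More generally $h=\sum_{r=0}^{n-1}x^r a^{\,n-1}$ gives $\dim(\ad H)(h)=n^2$, so your $n^2$ bound is sharp.) The paper's proof slips at exactly the point you flagged as ``delicate'': it shows each PBW monomial $x^ia^j$ lies in $\delta^n_H(H)$ and then writes ``since the adjoint action is $k$-linear, it now follows that $\delta^n_H(H)=H$'', implicitly treating $\delta^n_H(H)$ as a subspace, which it is not. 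So your instinct that the consolidation step is where the argument ``stands or falls'' was right --- it falls.
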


\begin{proof}
Suppose $q$ is a primitive $n$th root of unity.
Then by Lemma~\ref{lad}, for each $i,j$, 
\[
   (\ad x^{n+i}) (a^j) = 0 .
\]
Therefore $(\ad x^{n+i}a^{i'}) (a^j) = 0$ for all $i,i'$, and 
similarly for any linear combination of elements of the form
$x^{n+i}a^{i'}$.
It now follows from Lemma~\ref{lad} that
$\dim_k((\ad H)(a^j)) = n$, 
and so $a^j\in \delta_n(H)$.
Further, $\dim_k ((\ad H)(x^j))=1$, so $x^j\in \delta_1(H)$. 
Consequently, 
$\dim_k((\ad H)(x^ia^j)) =n$ 
for all $i\geq 0$ and  $j\geq 1$, since the
adjoint action is multiplicative, so
$x^ia^j \in \delta_n(H)$.  
Since the adjoint action is $k$-linear,
it now follows that $\delta^n_H(H) = H_{{\rm fin}} = H$.

Now suppose that $q$ is not a root of unity.
By Lemma~\ref{lad}, for all $i$,
\[
    (\ad x^i)(a) = \prod_{l=0}^{i-1} (1-q^{1-l}) x^i a^{1-i} .
\]
It follows that $a\not\in H_{{\rm fin}}$.
Therefore $H_{{\rm fin}}\neq H$.
\end{proof}

\section{The pointed Hopf algebras $U(\cD, \lambda)$ of Andruskiewitsch - Schneider}\label{pointed}

In this section we consider some infinite dimensional pointed Hopf algebras introduced by Andruskiewitsch and Schneider in their work on classification of finite dimensional pointed Hopf algebras~\cite{AS2}. Our algebras are somewhat more general, with fewer restrictions on the groups of grouplike elements. 

Let $\FF$ be a field of characteristic~0.
Let $\Gamma$ be a group and $\hat{\Gamma}$ its group of characters, that is group homomorphisms to $\FF^{\times}$, 
with identity element denoted $\varepsilon$. 
Let $\theta$ be a positive integer and $(a_{ij})_{1\leq i, j\leq \theta}$ a 
Cartan matrix of finite type. 
For each $i$, $1\leq i\leq \theta$, let $g_i$ be in the center of $\Gamma$ and $\chi_i\in \hat{\Gamma}$ such that $\chi_i(g_i)\neq 1$ and 
\[
    \chi_j(g_i) \chi_i(g_j) = \chi_i(g_i)^{a_{ij}}
\]
for all $i,j$. Let $q_{ij} = \chi_j(g_i)$. 
In case $\Gamma$ is finite abelian, we call 
\begin{equation}\label{edatum}
  \cD = (\Gamma, \ (g_i)_{1\leq i\leq \theta} , 
    (\chi_i)_{1\leq i\leq \theta}, \ (a_{ij})_{1\leq i,j\leq \theta} )
\end{equation}
a {\em datum of finite Cartan type}.

Let $V$ be the vector space with basis $x_1,\ldots,x_{\theta}$.
Define an action of $\Gamma$ on $V$ by 
\[
     g(x_i) = \chi_i(g) x_i
\]
and a coaction by $ \delta(x_i) = g_i\ot x_i$
for all $i$.
Then $V$ is a Yetter-Drinfeld module over $\FF\Gamma$. 
The tensor algebra $T(V)$ is a braided Hopf algebra in the
Yetter-Drinfeld category $^{\FF\Gamma}_{\FF\Gamma}{\rm {YD}}$, where
the braided coproduct takes $x_i$ to $x_i\ot 1 + 1\ot x_i$ for all $i$.
Define braided commutators:
\[
     \ad_c (x_i) (y) = x_i y - g_i(y) x_i
\]
for all $y\in T(V)$. 
Choose scalars $\lambda_{ij}$, $1\leq i,j\leq \theta$, $i\not\sim j$
(i.e.\ $i,j$ not in the same connected component of the Dynkin diagram)
for which $\lambda_{ij}=0$ if $g_ig_j =1$ or $\chi_i\chi_j\neq \varepsilon$.
Let $H=U(\cD, \lambda)$ be the Hopf algebra 
defined as in~\cite{AS2} to be the quotient of the
smash product $T(V)\# k\Gamma$ by relations
\[
\begin{aligned}
& \mbox{{\em (Serre)}} \hspace{1cm}   (\ad_cx_i)^{1-a_{ij}}(x_j) = 0 \quad 
    (i\neq j,  \ i\sim j) , \\
& \mbox{{\em (linking)}} \hspace{1cm} (\ad_cx_i)(x_j) = \lambda_{ij} (1-g_ig_j)
   \quad   (i<j, \ i\not\sim j) . 
\end{aligned}
\]
The coalgebra structure on $U(\cD, \lambda)$ is given  by 
\[
    \Delta(g) = g\ot g , \ \ \  \Delta(x_i) = x_i\ot 1 + g_i\ot x_i ,
\]
$\varepsilon(g) =1$, $\varepsilon(x_i)=0$, $S(g) = g^{-1}$,
and $S(x_i) = - g_i^{-1}x_i$ for all $g\in\Gamma$, $1\leq i\leq\theta$.
Thus we see that $\ad_c$ as defined above agrees with the
adjoint action of the Hopf algebra $H=U(\cD,\lambda)$ on itself. 

Using the root system $\Phi$ associated with  the Cartan matrix $(a_{ij})$, one can build finitely many elements $x_\beta$, $\beta\in \Phi^+$, as braided commutators in the algebra $T(V)$. If $\Gamma$ is a finite abelian group, then by \cite[Theorem 3.3]{AS2}, $H$ has a PBW basis determined by the root vectors corresponding to positive roots. We use this theorem to establish the following.
\begin{theorem}\label{tASany}  For arbitrary $\Gamma$,
if all $\chi_i(g_j)$ are roots of unity, then 
\begin{enumerate}
\item $H$ has a PBW basis consisting of elements
\[
x_{\beta_1}^{a_1}x_{\beta_2}^{a_2}\cdots x_{\beta_p}^{a_p}g,\mbox{ where } a_i\ge 0, \ g\in \Gamma.
\]
\item For any $\alpha,\beta\in\Phi^+$, there is a positive integer
$N_{\beta}$ and a scalar $q_{\alpha\beta}$ (given by products of values of $\chi_i(g_j)$) such that the elements $x_\beta^{N_\beta}$ generate
a Hopf ideal and 
\[
x_\alpha x_\beta^{N_{\beta}}=q_{\alpha\beta}^N x_\beta^{N_{\beta}} x_\alpha.
\]
Moreover, a power (at least $N_\beta$) of $x_{\beta}$ commutes with any other element. 
\end{enumerate}

\begin{proof} Denote 
\[
\Gamma_{{\mathbf{\chi}}} = \bigcap_{i=1}^{\theta} \mathrm{Ker}(\chi_i).
\]
Let us consider the datum $\overline{\cD}$ given by
\begin{equation}\label{edatum_bar}
  \overline{\cD} = (\overline{\Gamma}, \ (\bg_i)_{1\leq i\leq \theta} , 
    (\overline{\chi}_i)_{1\leq i\leq \theta}, \ (a_{ij})_{1\leq i,j\leq \theta} )
\end{equation}
where $\Gamma$ is replaced by $\overline{\Gamma}=\Gamma/\Gamma_\chi$, $\bg=g\Gamma_\chi$, $\overline{\chi}_i(\bg)=\chi_i(g)$. By the hypotheses and by its definition, $\overline{\Gamma}$ is necessarily finite abelian. 
One checks that this datum gives rise to an Andruskiewitsch-Schneider Hopf algebra $\bH=U(\overline{\cD},\lambda)$, where the vector space is again $V$ but with the natural Yetter-Drinfeld module structure over $\FF\overline{\Gamma}$. Since $\overline{\Gamma}$ is finite abelian, we can invoke \cite[Theorem~3.3]{AS2}, according to which $\bH$ is of finite codimension over its braided center. More precisely, $\bH$ has a PBW basis
\[
x_{\beta_1}^{a_1}x_{\beta_2}^{a_2}\cdots x_{\beta_p}^{a_p}\bg,\mbox{ where } a_i\ge 0, \ \bg\in \overline{\Gamma}.
\] 
Also \cite[Theorem~3.3]{AS2} ensures existence of the numbers $N_\beta$ such that $x_\beta^{N_\beta}$ is in the braided center of $\bH$. In addition, since all $\overline{\chi}_i(\bg_j)$ are necessarily of finite order, for each $\beta$ there is some integer (at least $N_\beta$) such that $x_\beta$ raised to this power commutes with any other element.

Considering the defining relations for $H$ and $\bH$, one easily checks that there is a ``natural'' homomorphism $\vp:H\to\bH$ such that $\vp(u\# g)=u\#\bg$. Using this homomorphism, the PBW-basis of $\bH$ and the numbers mentioned above for $\bH$, one finds that both statements of our theorem are true.
\end{proof}

\end{theorem}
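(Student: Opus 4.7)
The approach is to reduce to the finite-abelian case handled in~\cite[Theorem~3.3]{AS2} by passing to the quotient $\overline{\Gamma} = \Gamma/\Gamma_\chi$, where $\Gamma_\chi = \bigcap_{i=1}^{\theta} \ker \chi_i$, together with the induced datum $\overline{\cD}$ from~\eqref{edatum_bar}. First I would verify that $\overline{\Gamma}$ is finite abelian: each $\chi_i$ is a homomorphism to the abelian group $\FF^\times$, so $\Gamma_\chi \supseteq [\Gamma,\Gamma]$ is normal and $\overline{\Gamma}$ is abelian; the map $\overline{\Gamma} \hookrightarrow (\FF^\times)^\theta$, $\bg \mapsto (\chi_i(g))_i$, is injective, and its image is generated by the $\bg_j$'s, each of finite order by the root-of-unity hypothesis on $\chi_i(g_j)$, so $\overline{\Gamma}$ is finite. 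A short check confirms that $\overline{\cD}$ is a valid datum of finite Cartan type and that the linking parameters $\lambda_{ij}$ remain admissible, since $\chi_i\chi_j = \varepsilon$ in $\hat\Gamma$ iff $\overline{\chi}_i\overline{\chi}_j = \varepsilon$ in $\hat{\overline{\Gamma}}$, and $g_ig_j = 1$ forces $\bg_i\bg_j = 1$.

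Applying~\cite[Theorem~3.3]{AS2} to $\bH = U(\overline{\cD},\lambda)$ then yields the PBW basis $\{x_{\beta_1}^{a_1}\cdots x_{\beta_p}^{a_p} \bg\}$ of $\bH$, integers $N_\beta$ with $x_\beta^{N_\beta}$ in the braided center (and generating a Hopf ideal), and the $q$-commutation $x_\alpha x_\beta^{N_\beta} = q_{\alpha\beta}^{N_\beta} x_\beta^{N_\beta} x_\alpha$ with $q_{\alpha\beta}$ a product of the values $\chi_i(g_j)$, hence a root of unity. Since the Serre and linking relations of $H$ map verbatim to those of $\bH$, there is a well-defined Hopf surjection $\vp: H \to \bH$, $u\# g \mapsto u\# \bg$, that restricts to the identity on the subalgebra generated by~$V$.

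To transfer the conclusions back to $H$, spanning of the claimed PBW set in part~(1) follows from the same Bergman-diamond rewriting used in~\cite[\S2]{AS2}, which only invokes relations available in $H$. For linear independence, the key observation is that $\Gamma_\chi$ acts trivially on $V$ (by definition $\chi_i|_{\Gamma_\chi}=1$), so $\Gamma_\chi$ commutes with each $x_\beta$ in $H$; fixing a set-theoretic transversal $\{g_\alpha\}$ of $\Gamma_\chi$ in $\Gamma$, the elements $\{x_{\beta_1}^{a_1}\cdots x_{\beta_p}^{a_p} g_\alpha\}$ form a free left $\FF\Gamma_\chi$-basis of $H$, and applying $\vp$ to each $\FF\Gamma_\chi$-component reduces linear independence in $H$ to that already known in $\bH$. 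For part~(2), the identity $x_\alpha x_\beta^{N_\beta} = q_{\alpha\beta}^{N_\beta} x_\beta^{N_\beta} x_\alpha$ involves only the subalgebra of $H$ generated by $V$, on which $\vp$ is injective by part~(1), so it lifts directly from $\bH$; the final assertion that some power $x_\beta^N$ with $N\geq N_\beta$ commutes with any given element is then obtained by raising the exponent so that $\chi_\beta^N$ is trivial on the relevant $g \in \Gamma$, permissible since the relevant character values are roots of unity under the hypothesis. The principal obstacle is the linear-independence step, which requires confirming that the kernel of $\vp$ contributes no relations beyond those encoded in the $\FF\Gamma_\chi$-action on the PBW basis of $\bH$.
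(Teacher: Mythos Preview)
Your approach is exactly the paper's: pass to $\overline{\Gamma}=\Gamma/\Gamma_\chi$, invoke \cite[Theorem~3.3]{AS2} for $\bH=U(\overline{\cD},\lambda)$, and pull conclusions back through the surjection $\vp:H\to\bH$. You are in fact more explicit than the paper on the transfer step---the paper simply asserts that ``one finds that both statements of our theorem are true,'' whereas your transversal argument for $\Gamma_\chi$ is the natural way to make the linear-independence step precise.

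One genuine slip: you assert that the image of $\overline{\Gamma}\hookrightarrow(\FF^\times)^\theta$ is \emph{generated by the $\bg_j$'s}. Nothing in the setup forces $\Gamma$ to be generated by $g_1,\ldots,g_\theta$, so this is false in general, and the literal hypothesis ``all $\chi_i(g_j)$ are roots of unity'' does not by itself make $\overline{\Gamma}$ finite. What actually gives finiteness is that each $\chi_i$ has finite \emph{image} in $\FF^\times$ (equivalently, finite order as a character); this is the form of the hypothesis used wherever the theorem is applied, namely condition~(2) of Theorems~\ref{A-S-PI} and~\ref{tHFIN}. With that reading your argument goes through unchanged, since $\overline{\Gamma}$ then embeds in the finite group $\prod_i\mathrm{im}(\chi_i)$.
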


We give several small examples to illustrate the ubiquity of the Hopf algebras
$U(\cD,\lambda)$. Their polynomial identities and delta sets will be taken up in the next section.

\begin{example}
($U_q({\mathfrak{sl}}_3)^{\geq 0}$)
Let $\ell$ be an odd positive integer and $\Gamma = \ZZ/\ell\ZZ\times \ZZ/\ell \ZZ$
(respectively $\Gamma = \ZZ\times \ZZ$), with generators $g_1,g_2$.
Let $q$ be a primitive $\ell$th root of unity in $\CC$ 
(respectively, let $q$ be any element of $\CC$ other than $0,1,-1$).
Consider the type $A_2$ Cartan matrix,
\[
    \left(\begin{array}{rr} 2 & -1 \\ -1 & 2 \end{array}\right)
\]
Define characters $\chi_1,\chi_2$ by
\[
  \begin{array}{rl} \chi_1(g_1) = q^2 , & \hspace{.5cm} 
    \chi_1(g_2) = q^{-1} , \\
     \chi_2(g_1) = q^{-1} , & \hspace{.5cm} \chi_2(g_2) = q^2 .
\end{array}
\]
Then $U(\cD,0)\cong R\# \CC\Gamma$ where $R$ is the algebra
generated by $x_1,x_2$ with relations
\[
   x_{12} x_1 = q^{-1} x_1 x_{12} , \  \ \ x_2 x_{12} = q^{-1} x_{12}x_2 , 
\]
where $x_{12}:= [x_1,x_2]_c = x_1x_2 - q^{-1}x_2x_1$.
The algebra $R$ has PBW basis $\{ x_1^a x_{12}^b x_2^c \mid a,b,c\geq 0\}$. 
In case $\Gamma$ is infinite, this is the
quantum group $U_q({\mathfrak{sl}}_3)^{\geq 0}$.
In case $\Gamma$ is finite, a quotient by the ideal generated by
powers of $x_1,x_{12},x_2$ is the small
quantum group $u_q({\mathfrak{sl}}_3)^{\geq 0}$. 
\end{example}

\begin{example}
($U_q({\mathfrak{sl}}_2)$)
Let $\ell$ be an odd positive integer and $\Gamma =  \ZZ/\ell \ZZ$
(respectively $\Gamma = \ZZ$), with generator $g$.
Let $q$ be a primitive $\ell$th root of unity in $\CC$ 
(respectively, let $q$ be any element of $\CC$ other than $0,1,-1$).
Consider the type $A_1\times A_1$ Cartan matrix,
\[
    \left(\begin{array}{rr} 2 & 0 \\ 0 & 2 \end{array}\right)
\]
Let $g_1=g_2=g$ and 
define characters $\chi_1,\chi_2$ by
$\chi_1(g) = q^{-1} , \ 
     \chi_2(g) = q $.
Let $\lambda_{12} = \frac{q}{q-q^{-1}}$. 
Then $U(\cD,\lambda)$ is the quotient of the smash product
$T(V)\# \CC\Gamma$, where $V$ is a vector space with
basis $x_1,x_2$, by the ideal generated by
\[
     x_1x_2 - q x_2 x_1 - \lambda (1-g^2) .
\]
In case $\Gamma$ is infinite, this is isomorphic to
$U_q({\mathfrak{sl}}_2)$.
(An isomorphism is given by $x_1\mapsto E$, $x_2\mapsto K^{-1}F$,
$g\mapsto K^{-1}$.)
In case $\Gamma$ is finite, a quotient by the ideal generated by
powers of $x_1,x_2$ is the small quantum group $u_q({\mathfrak{sl}}_2)$. 
\end{example}

\begin{example}
(Generalization of $F_{(q)}$ to more generators \cite{AS})
Let $\Gamma = \ZZ\times\cdots\times\ZZ$ ($n$ copies) with
generators $g_1,\ldots, g_n$. 
Consider the type $A_1\times\cdots\times A_1$ Cartan matrix.
Choose nonzero scalars $q_{ij}$ ($i\leq j$).
Define characters $\chi_1, \ldots, \chi_n$ by 
$\chi_i(g_j) = q_{ji}$ for $i\leq j$ and
$\chi_i(g_j) = q_{ij}^{-1}$ for $i>j$. 
Let $\lambda_{ij}=0$ for all $i,j$. 
Then $U(\cD,0)\cong R\# k\Gamma$ where $R$ is the algebra
generated by $x_1,\ldots,x_n$ with relations
\[
   x_i x_j = q_{ij} x_j x_i .
\]
The quotient by the ideal generated by all $x_i^{N_i}$
(where $N_i = o (q_{ii})$) is the quantum linear space of \cite{AS}.

\end{example}

\section{Identical relations and Delta sets in $U(\cD,\lambda)$}\label{ssIR} 

In this section, we determine necessary and sufficient conditions for $U(\cD,\lambda)$, defined in Section~\ref{pointed}, to satisfy a polynomial identity, or to be equal to its delta set.

\subsection*{Identities in  $U(\cD,\lambda)$}%\label{ssPIUDL}

\begin{theorem}\label{A-S-PI}
An algebra $H=U(\cD,\lambda)$ satisfies a nontrivial polynomial identity if and only if 
\begin{enumerate}
\item $\Gamma$ has an abelian normal subgroup $\widetilde{\Gamma}$ of finite index, which contains the center of $\Gamma$, hence the elements $g_1,\ld,g_\theta$;
\item the orders of all characters $\chi_1,\ld,\chi_\theta$ are finite.
\end{enumerate}
\end{theorem}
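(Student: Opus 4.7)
\emph{Necessity.} The plan is to establish the two conditions in turn. Condition~(1) follows from the fact that $\FF\Gamma\subseteq H$ is itself PI, so Passman's theorem in characteristic zero (cited in Section~\ref{ssCCHA}) yields an abelian subgroup of finite index in $\Gamma$. Its normal core is again abelian of finite index, and enlarging by $Z(\Gamma)$ preserves abelianness (the center commutes with everything) and normality, while forcing $g_1,\ldots,g_\theta$ into the resulting subgroup $\widetilde\Gamma$.

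\emph{Condition (2).} For each $i$ and each $g\in\widetilde\Gamma$, PBW-type independence in $U(\cD,\lambda)$ identifies the subalgebra $\langle g,x_i\rangle\subseteq H$ with the skew (Laurent) polynomial extension of $\FF[\langle g\rangle]$ by $x_i$ satisfying $gx_i=\chi_i(g)x_ig$. I will show $\chi_i|_{\widetilde\Gamma}$ has finite order. If not, the image $\chi_i(\widetilde\Gamma)\subseteq\FF^\times$ has unbounded exponent, since any subgroup of $\mu_\infty(\FF)$ of bounded exponent is finite. For each integer $N$ I pick $g_N\in\widetilde\Gamma$ with $q_N:=\chi_i(g_N)$ of (necessarily finite) order $n\geq N$. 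Since $\mathrm{ord}(q_N)\mid\mathrm{ord}(g_N)$ whether $g_N$ has finite or infinite order, the module $V_n$ of Proposition~\ref{pIM_nFQ} descends through the appropriate quotient of $F_{(q_N)}$ to a simple $n$-dimensional module for $\langle g_N,x_i\rangle\subseteq H$. By the Density Theorem and Amitsur--Levitzki, this forces $H$ not to satisfy any identity of degree less than $2n$ for arbitrarily large $n$, contradicting PI. Hence $\chi_i|_{\widetilde\Gamma}$ has finite order, and $|\Gamma:\widetilde\Gamma|<\infty$ lifts this to finite order of $\chi_i$ on all of $\Gamma$.

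\emph{Sufficiency.} Now assume (1) and (2). All $\chi_i(g_j)$ are roots of unity by (2), so Theorem~\ref{tASany} provides a PBW basis for $H$ and, for each $\beta\in\Phi^+$, a power $M_\beta$ (a suitable multiple of $N_\beta$) of $x_\beta$ that is central in $H$. Put $\Gamma_\chi:=\bigcap_i\ker\chi_i$, which has finite index by (2), and $A:=\widetilde\Gamma\cap\Gamma_\chi$: an abelian normal finite-index subgroup of $\Gamma$ whose elements commute with every $x_\beta$. The subalgebra $K\subseteq H$ generated by $A$ and $\{x_\beta^{M_\beta}\}_\beta$ is then commutative. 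Writing each PBW exponent as $a_i=q_iM_{\beta_i}+r_i$ with $0\leq r_i<M_{\beta_i}$ and each $g\in\Gamma$ as $g=ac$ with $a\in A$ and $c$ in a finite transversal of $A$ in $\Gamma$ exhibits $H$ as a finitely generated left $K$-module.

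\emph{Conclusion and main obstacle.} Right multiplication embeds $H$ antihomomorphically into $\End_K(H)$; since $H$ is a $K$-quotient of $K^n$ for some $n$, $\End_K(H)$ is a subquotient of $M_n(K)\cong M_n(\FF)\otimes_\FF K$, which is PI by Regev's Theorem~\ref{tAR}. Hence $H$ is PI. The trickiest point is the necessity of~(2): the hypothesis is not that $\chi_i$ takes root-of-unity values (which torsion characters do automatically) but that $\chi_i$ itself has finite order. Passing from the former to the latter requires constructing simple modules for $H$ of arbitrarily large dimension, for which Proposition~\ref{pIMFQ} is too weak and one must use the finite-dimensional family $V_n$ of Proposition~\ref{pIM_nFQ} together with a careful descent to the appropriate quotient of $F_{(q_N)}$.
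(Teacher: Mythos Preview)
Your proof is correct and follows the same route as the paper: Passman's theorem for the necessity of~(1), $F_{(q)}$-type subalgebras for the necessity of~(2), and exhibiting $H$ as a finite module over a commutative subalgebra for sufficiency. One quibble: the parenthetical ``(necessarily finite)'' in your Condition~(2) argument overlooks the possibility that some $\chi_i(g)$ is not a root of unity at all --- but that case is even easier (Theorem~\ref{pPIFQ} applies directly to the resulting $F_{(q)}$), so your closing remark that Proposition~\ref{pIMFQ} is ``too weak'' is misplaced; the paper simply says ``order of $q$ greater than $m$'' and cites Corollary~\ref{cq_root} uniformly, implicitly covering both the large-finite-order and infinite-order cases.
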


\begin{proof} Assume that $H$ satisfies a nontrivial polynomial identity. 
Since $\CC \Gamma$ is a subalgebra in $H$, one can use  D. S. Passman's Theorem \cite{P} to conclude that such $\widetilde{\Gamma}$ of finite index in $\Gamma$ exists. So  Condition (1) must hold. Now if the order of some $\chi_i$ is infinite then for any $m$ there is $g\in\Gamma$ such that the order of $q=\chi_i(g)$ is greater than $m$. In this case, $H$ contains a subalgebra isomorphic to $F_{(q)}$ such that by Corollary \ref{cq_root} this subalgebra does not satisfy a polynomial identity  of degree less than $2m$. As a result, $H$ does not satisfy a nontrivial polynomial identity.  This contradiction shows that Condition (2) must also hold.

Now assume that both conditions are satisfied. By Theorem \ref{tASany}, $H$ is a free right module of finite type over a subalgebra $K$ generated by $\{x_\beta^{N_\beta}\,|\,\beta\in\Phi_+\}$, and $\Gamma_\chi$. At the same time $\CC\Gamma_\chi$ is a free right module of finite type over a commutative subalgebra $\CC(\Gamma_\chi\cap\widetilde{\Gamma})$. If $S$ is the subalgebra generated by $\{x_\beta^{N_\beta}\,|\,\beta\in\Phi_+\}$ and $\Gamma_\chi\cap\widetilde{\Gamma}$, then $S$ is commutative and $H$ is a free right module of finite type over the commutative subalgebra $S$. It follows that $H$ is a PI-algebra.

\end{proof}

\subsection*{Delta sets in  $U(\cD,\lambda)$}%\label{ssDSUDL}

Recall from Section~\ref{sDS} that a group $G$ is an \textit{$FC$-group} if all conjugacy classes in $G$ are finite. In other words, $\Delta(G)=G$.

\begin{theorem}\label{tHFIN} 
Let  $H=U(\cD,\lambda)$.
Then $H_{{\rm{fin}}} = H$ if and only \begin{enumerate}
\item $\Gamma$ is an FC-group;
\item the orders of all characters $\chi_1,\ld,\chi_\theta$ are finite.
\end{enumerate}
\end{theorem}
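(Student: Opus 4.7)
My plan is to prove both implications by explicitly evaluating the iterated adjoint action of $x_i$ on grouplikes, echoing the calculation of Lemma~\ref{lad} and Section~\ref{sFQ} and making use of the PBW basis for $H$ furnished by Theorem~\ref{tASany}.

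For the forward direction $(\Rightarrow)$, I would first extract (1) by taking any $g\in\Gamma\subseteq H=H_{\rm fin}$ and observing that the subspace $(\ad\FF\Gamma)(g)\subseteq(\ad H)(g)$ is spanned by the conjugates $\{g'gg'^{-1}:g'\in\Gamma\}$. These are distinct grouplikes, hence linearly independent, so the conjugacy class of $g$ is forced to be finite and $\Gamma$ is an FC-group. For (2), I would establish by induction on $n$ the formula
\[
\ad(x_i^n)(g)=\prod_{k=0}^{n-1}\bigl(\chi_i(g)^{-1}-q_{ii}^k\bigr)\,gx_i^n,
\]
using $\Delta(x_i)=x_i\otimes 1+g_i\otimes x_i$, $S(x_i)=-g_i^{-1}x_i$, the centrality of $g_i$ in $\Gamma$, and the relation $gx_i=\chi_i(g)x_ig$. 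The monomials $gx_i^n$ are linearly independent by PBW, so finite-dimensionality of $(\ad H)(g)$ forces the product to vanish for $n$ large, giving $\chi_i(g)^{-1}=q_{ii}^k$ for some $k\geq 0$. Running the same argument with $g^{-1}$ in place of $g$ and using $\chi_i(g_i)\ne 1$ to exclude the trivial character, I would first conclude that $q_{ii}$ has finite order, and then that $\chi_i(\Gamma)\subseteq\langle q_{ii}\rangle$ is finite cyclic, whence each $\chi_i$ has finite order.

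For the converse $(\Leftarrow)$, assume (1) and (2). Under (2) every $q_{ij}=\chi_j(g_i)$ is a root of unity, so Theorem~\ref{tASany} applies and provides both a PBW basis and integers $M_\beta$ for which $x_\beta^{M_\beta}$ commutes with every element of $H$. Since $\ad$ is a measuring, one has $\dim(\ad H)(ab)\le\dim(\ad H)(a)\cdot\dim(\ad H)(b)$, so $H_{\rm fin}$ is a subalgebra of $H$; it therefore suffices to check that the generators of $H$ lie in $H_{\rm fin}$. For $g\in\Gamma$, the FC hypothesis bounds the contribution of $\ad\FF\Gamma$, while the formula above combined with the finite order of $\chi_i$ makes the coefficient vanish for $n$ large; together with centrality of $x_\beta^{M_\beta}$ this shows that $(\ad H)(g)$ is finite-dimensional. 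For $x_i$, the identity $\ad(g)(x_i)=\chi_i(g)x_i$ confines $\ad(\Gamma)(x_i)$ to $\FF x_i$, while the Serre and linking relations together with the centrality of $x_j^{M_j}$ confine the iterated braided commutators $\ad(x_j^n)(x_i)$ to a finite-dimensional subspace. Assembling these estimates yields $H_{\rm fin}=H$.

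The main obstacle is in the $(\Leftarrow)$ direction: one must guarantee that $\prod_{k=0}^{n-1}(\chi_i(g)^{-1}-q_{ii}^k)$ really does vanish for large $n$ and \emph{every} $g\in\Gamma$, which needs $\chi_i(g)^{-1}\in\langle q_{ii}\rangle$ rather than just that $\chi_i$ have finite order in $\hat\Gamma$. I would secure this through the reduction used in the proof of Theorem~\ref{tASany}: pass to the finite abelian quotient $\overline\Gamma=\Gamma/\Gamma_\chi$, where the Cartan-type relations $\chi_j(g_i)\chi_i(g_j)=\chi_i(g_i)^{a_{ij}}$ confine each $\overline\chi_i$ to the finite cyclic subgroup generated by $q_{ii}$, and then pull the resulting bounds back to $H$. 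A parallel but easier analysis of $(\ad H)(x_i)$, combining PBW with the central powers $x_\beta^{M_\beta}$, will complete the proof.
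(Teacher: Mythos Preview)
Your forward direction $(\Rightarrow)$ is correct and is essentially the paper's argument: the paper obtains (1) via Proposition~\ref{pBP} (which is exactly your observation that distinct conjugates of $g$ are linearly independent grouplikes in $\FF\Gamma$), and obtains (2) by passing to an $F_{(q)}$-subalgebra and invoking Lemma~\ref{lad}(i), which is your induction formula specialised to the generator of that subalgebra.

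The genuine gap is in $(\Leftarrow)$, and you have in fact put your finger on it: for
\[
(\ad x_i^{\,n})(g)=\prod_{k=0}^{n-1}\bigl(\chi_i(g)^{-1}-q_{ii}^{\,k}\bigr)\,g\,x_i^{\,n}
\]
to vanish for large $n$ one needs $\chi_i(g)^{-1}\in\langle q_{ii}\rangle$, not merely that $\chi_i$ have finite order. Your proposed fix, however, does not work. The Cartan-type relations $\chi_j(g_i)\chi_i(g_j)=q_{ii}^{\,a_{ij}}$ constrain only the values $\chi_i(g_j)$ at the distinguished elements $g_1,\ldots,g_\theta$; they say nothing about $\chi_i(g)$ for general $g\in\Gamma$, and $\overline\Gamma=\Gamma/\Gamma_\chi$ need not be generated by the images $\overline g_j$. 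Concretely, take $\theta=1$, Cartan type $A_1$, $\Gamma=\ZZ/6\ZZ=\langle h\rangle$, $g_1=h^3$, and $\chi_1(h)=\zeta_6$ a primitive sixth root of unity. Then $q_{11}=\chi_1(g_1)=-1$ and both hypotheses (1), (2) hold, yet $\chi_1(h)^{-1}=\zeta_6^{-1}\notin\{1,-1\}=\langle q_{11}\rangle$, and your product equals $(\zeta_6^{-1}-1)^{\lceil n/2\rceil}(\zeta_6^{-1}+1)^{\lfloor n/2\rfloor}$, which is nonzero for every $n$. So the argument cannot be completed along the line you indicate.

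For comparison, the paper's $(\Leftarrow)$ argument does not go through the product formula at all. It invokes Theorem~\ref{tASany} to obtain powers $x_\beta^{M_\beta}$ that are \emph{central} in $H$ and then asserts that the operators $\ad(x_\beta^{M_\beta})$ vanish; granting this, only finitely many PBW monomials in the $x_\beta$ can act nontrivially under $\ad$, and the FC hypothesis controls the $\Gamma$-part. Thus the paper's mechanism is centrality of root-vector powers, not the vanishing of the scalar $\prod_k(\chi_i(g)^{-1}-q_{ii}^{\,k})$, and your obstacle about $\chi_i(\Gamma)\subseteq\langle q_{ii}\rangle$ never enters their argument. (Note, though, that in a Hopf algebra centrality of $z$ by itself does not force $\ad(z)=0$; the example above is also a good test case for that assertion, and you should examine it carefully if you wish to follow the paper's route.)
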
 

\begin{proof}
Assume our Conditions (1) and (2) hold. 
Then all $q_{ij}$ are roots of unity. 
By Theorem~\ref{tASany}, 
there are positive integers $M_{i}$ such that the
elements $x_{\beta_1}^{M_{1}},\ldots, x_{\beta_p}^{M_p}$
are central in $U(\cD,\lambda)$, and  
$\ad x_{\beta_1}^{M_1}, \ldots \ad x_{\beta_p}^{M_p}$
are zero operators.
Since for each $g\in G$, $\ad g$ applied to any PBW basis element 
$x_{\beta_1}^{a_1}\cdots x_{\beta_{p}}^{a_p}$ 
is multiplication by a root of unity, under conditions (1) and (2), 
\[
   \{   \ad(x_{\beta_1}^{b_1}\cdots x_{\beta_p}^{b_p} g ) (h) \mid
     b_1,\ldots, b_p\geq 0; g\in G \}
\]
is a finite set for each $h\in H$. 
That is, $H_{\rm{fin}} = H$.

Now assume  $H_{\rm{fin}}= H$. Then also $(\CC G)_{\rm{fin}} = \CC G$. By Proposition \ref{pBP}, we have that $(\CC G)_{\rm{fin}} = \CC \Delta(G)$. So we must have $\CC \Delta(G)=\CC G$. It follows that $G=\Delta(G)$, so Condition (1) is satisfied. 

If condition (2) does not hold, then as we saw in the proof of
Theorem~\ref{A-S-PI}, for any $m$, $H$ has a subalgebra
isomorphic to $F_{(q)}$ where $q$ has order
greater than $m$. By Lemma~\ref{lad}(i), there exists an element
of $H$ that is not in $\delta^m_H(H)$.
Therefore $H_{{\rm{fin}}}\neq H$.
\end{proof}

\section{Proof of Theorem \ref{tLSABW}}\label{sPT}

\subsection*{Sufficiency}
Let us first prove that the  conditions of Theorem \ref{tLSABW} guarantee that an algebra $R=U(L)\#\FF G$ is a PI-algebra. For this, we note that $L_+\oplus M$ is a $G$-invariant ideal in $L$. Let us consider in $R$ the subalgebra
\[
R_1=U(L_+\op M)\#\FF A.
\]
It follows from the PBW Theorem for color Lie superalgebras (see \cite[Chapter 3, Theorem 2.2]{BMPZ} that $R$ is a left $R_1$-module generated by the elements of the form
\begin{equation}\label{e8}
x_{i_1}\cdots x_{i_s}\# g_j,\;i_1<\cdots<i_s, s=1,2,\ld,
\end{equation}
where $\{x_i\}$ is a basis of $L_-$ modulo $M$ and $\{g_j\}$ is a right transversal for $A$ in $G$. Since the number of elements of the form (\ref{e8}) is finite, it is sufficient to prove that $R_1$ is a PI-algebra. 

Let us consider a two-sided ideal $I$ of $R_1$ generated by $N=[L_+,M]$. Since $N$ is an $A$-invariant ideal in the superalgebra $L_+\op M$, we can see that 
\[
I=R_1N=NR_1.
\]
By the PBW Theorem, $N^{\dim N+1}=\{ 0\}$, so that $I^{\dim N+1}=\{ 0\}$, showing that $I$ is a nilpotent ideal. Moreover,
\[
R_1/I\cong U(L_+\op M/N)\#\FF A\cong U(L_+\op M/N)\ot\FF A
\]
since the actions of $A$ and of $L_+$ on $M/N$ are trivial. Now $L_+\op(M/N)$ satisfies the conditions of \cite[Chapter 4, Theorem 1.2]{BMPZ} and so $U(L_+\op M/N)$ is a PI-algebra. Since $\FF A$ is commutative, $R_1$ is indeed a PI-algebra.
\subsection*{Necessity} 
If $U(L)\# \FF G$ is PI, then also $U(L)$ and $\FF G$ are PI and so we have an abelian subgroup $A_1$ of finite index in $G$ and an $L_+$-submodule $M$ in $L_-$, satisfying the following conditions:
\begin{equation}\label{e9}
[L_+,L_+]=0,\;[M,M]=0,\; L_-/M\mbox{ and } [L_+,M]\mbox{ are finite-dimensional.}
\end{equation}
Now we need to consider the action of $G$ on $L$. Without loss of generality, we may assume that $G$ is abelian and $[L_-,L_-]=0$. 

First we look at $U(L_+)\#\FF G$. Let us denote by $p\ast x$ the result of the action of $p\in\FF G$ on $x\in U(L_+)$. The existence of a subgroup of finite index in $G$, which trivially acts on $L_+$, can be recovered from the proof of \cite[Theorem 2.3]{BP}. In that proof just a couple of lines need changing. 
First of all, instead of a polynomial ring $U(L)$ we need to deal with a color polynomial ring $U(L_+)$, where $L_+$ is an abelian color Lie algebra. This is a $T$-graded vector space and $xy=\beta(t,u)yx$ for homogeneous elements $x$ of degree $t$ and $y$ of degree $u$. Now $G$ acts by $T$-graded automorphisms and so for any $p,q\in\FF G$, $p\ast x$ is still homogeneous of degree $t$ and $q\ast y$ is still homogeneous of degree $u$. Now $U(L_+)$ is still an integral ring but $\beta$-commutative, not commutative. The only place in the the proof in question where the commutativity is used in that paper is on p.~374, after equation (4). In the lines of that paper that follow, one needs to show that if, for some $p_i,q_i\in\FF G,\:x,y\in U(L_+)$ one has
\[
(p_1\ast x)(q_1\ast y)+\cdots+(p_m\ast x)(q_m\ast y)=0,
\] 
then one also has
\[
(q_1\ast x)(p_1\ast y)+\cdots+(q_m\ast x)(p_m\ast y)=0.
\]
Choosing $x,y$ homogeneous of degree $t,u$, respectively and using $\beta$-commutativity proves that this can be done in the case of $U(L_+)$. The triviality of the action of a subgroup of finite index on all homogeneous elements of $L_+$ implies the triviality of its action on the whole of $L_+$.

Thus we may assume that we deal with the smash products $U(L)\#\FF G$ where $G$ is abelian acting trivially on the whole of $L_+$. Now we proceed to the study of the action of $G$ on $L_-$. Since $U(L_-)\# \FF G\subset U(L_-)\# \FF G$, we may assume that $U(L_-)\# \FF G$ is a PI-algebra and so there is a nontrivial identical relation for the action of $G$ on $U(L_-)$ such that $L_-$ is a $G$-invariant subspace. Since $[L_-,L_-]=0$, we know from the PBW Theorem for color Lie superalgebras (see \cite[Chapter 3, Theorem 2.2]{BMPZ} that $U(L)$ is a color Grassmann algebra $\Lambda^\beta(L_-)$, where $\beta:T\times T\to \FF^\times$ is an alternating bicharacter such that $\beta(t,t)=-1$ for any $t\ne 1$ in $T$. We will use the proof of \cite[Theorem 2.14]{MK} (where $L$ is an ordinary superalgebra, that is, $L_+=L_0$ and $L_-=L_1$) to prove the following more general result. 

\begin{proposition}\label{pT2.4} Let $L=L_+\op L_-$ be a color Lie superalgebra over a field $\FF$ of characteristic zero such that $R=U(L_-)\#\FF G$ is a PI-algebra. If there is an identity holding in $R$ of the form
\begin{equation}\label{ee1}
\sum_{\pi\in \mathrm{Sym}(n)}\gamma_\pi(h_1\ast X_{\pi(1)})\cdots(h_n\ast X_{\pi(n)})=0
\end{equation}
where $h_1,\ld,h_n\in\FF G$, $X_1,\ld,X_n\in L_-$, then there is a subgroup $G_1$ of finite index in $G$, and a $G$-invariant subspace $M$ of finite codimension in $L_-$ and $N$ of finite dimension, such that the action of $G_1$ on $M/N$ is trivial.
\end{proposition}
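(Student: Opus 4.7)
My plan is to adapt the proof of \cite[Theorem~2.14]{MK}---the ordinary super analog of Proposition~\ref{pT2.4}---to the color setting. Kochetov's argument is multilinear, and its only sensitivity to the superstructure is through the exterior-algebra PBW basis of $U(L_-)$; replacing the alternating signs there by the bicharacter values $\beta(t,u)$ preserves all of the dimension estimates. The proof splits into two tasks: producing the subspaces $M$ and $N$ via Delta-set techniques, and then locating the finite-index subgroup $G_1$ that acts trivially on $M/N$.

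First I would apply Theorem~\ref{tMKfil} to the $\FF G$-module algebra $A=U(L_-)$ with generating subspace $V=L_-$. Since $[L_-,L_-]=0$ and $\beta(t,t)=-1$ for each odd $t\in T$, the PBW theorem for color Lie superalgebras (\cite[Chapter~3, Theorem~2.2]{BMPZ}) identifies $U(L_-)$ with the color Grassmann algebra $\Lambda^\beta(L_-)$, and its associated graded algebra with respect to the degree filtration is again $\Lambda^\beta(L_-)$. The proof of Theorem~\ref{tMKfil} uses only the existence of a multilinear basis indexed by strictly increasing monomials, which PBW continues to provide in the color case. Applying it to the identity~(\ref{ee1}) yields
\[
\dim L_-/\delta_{\FF G}^{n^2}(L_-) < n \quad\text{and}\quad \dim \FF G/\delta_{L_-}^{n^2}(\FF G) < n.
\]

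Next I would invoke Lemma~\ref{lDIM} to pass to honest subspaces: set $M_1=\Span_\FF \delta^{n^2}_{\FF G}(L_-)$ and $H_1=\Span_\FF \delta^{n^2}_{L_-}(\FF G)$. Then $M_1$ is a $G$-invariant subspace of $L_-$ of finite codimension and $H_1$ is a two-sided ideal of $\FF G$ of finite codimension. P.M.~Neumann's Theorem~\ref{tPMN} applied to the bilinear map $(h,v)\mapsto h\cdot v-\varepsilon(h)v$ on $H_1\times M_1$ bounds its image in finite dimension. Letting $N$ be the (finite-dimensional) $\FF G$-submodule of $M_1$ generated by this image and $M=M_1$, we obtain $G$-invariant $N\subset M$ with $N$ finite-dimensional, $M$ of finite codimension in $L_-$, and $h\cdot v\equiv\varepsilon(h)v\pmod{N}$ for all $h\in H_1$ and $v\in M$.

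It remains to produce the finite-index subgroup $G_1$ acting trivially on $M/N$. The action of $\FF G$ on $M/N$ factors through the finite-dimensional quotient $\FF G/(H_1\cap\ker\varepsilon)$, so $G$ maps into the unit group of a finite-dimensional $\FF$-algebra of endomorphisms of $M/N$. I would then follow Kochetov's argument from \cite[Theorem~2.14]{MK}---iteratively refining Delta sets and using the polynomial identity (\ref{ee1}) to bound the multiplicative orders of the eigenvalues of individual $g\in G$ on $M/N$---to extract a finite-index subgroup $G_1\subset G$ whose elements act as the identity on $M/N$. This last step is the main obstacle I foresee: one must verify that Kochetov's eigenvalue-order bound survives the color twist. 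Because only the multiplicative structure of the scalars enters that bound, and the coloring appears only through products of $\beta$-values with characters of $G$, I expect the argument to transfer essentially verbatim.
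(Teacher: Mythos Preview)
Your proposal is correct and follows essentially the same route as the paper: apply Theorem~\ref{tMKfil} to the color Grassmann algebra $U(L_-)=\Lambda^\beta(L_-)$ (its proof uses only the PBW basis), pass via Lemma~\ref{lDIM} to a $G$-invariant $M$ and an ideal $I\subset\FF G$ of finite codimension, invoke P.~M.~Neumann's Theorem~\ref{tPMN}, and then call on the color version of Kochetov's finite-index argument. The paper is only slightly more specific at the last step: it isolates \cite[Lemmas~2.9--2.11 and Proposition~2.12]{MK} (feeding into Proposition~2.13) as the statements that must be re-verified, and notes the single reason they survive the color twist---two homogeneous odd elements of the \emph{same} degree $t$ still satisfy $yx=\beta(t,t)xy=-xy$, so the cyclic-action computations in the Grassmann algebra are literally unchanged; the paper also sets $G_1=\delta_{L_-}(G)$ first and builds $N$ afterwards, the reverse of your ordering, but the content is the same.
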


In the proof of this result Kochetov uses Theorem \ref{tMKfil}. In this theorem one deals with algebras $A\# G$ generating a subspace $V$ such that the associated graded algebra $\gr A$ is one of $S(V)$, $\Lambda(V)$ or $S(V)/\mathrm{ideal}(V^{[p]})$. However, the proof only uses the PBW-bases for $A$ in question. Since the PBW Theorem holds for color Lie superalgebras, without any changes the proof applies to $A=U(L)$, in particular to color Grassmann algebras. 

So we have the following.

\begin{proposition}\label{pMKfil} If (\ref{ee1}) holds in $U(L_-)$ for any $h_1,\ld,h_n\in\FF G$ and $X_1,\ld,X_n\in L_-$, then
\begin{enumerate}
\item $\dim L_-/\delta_H^{n^2}(L_-)<n$;
\item $\dim H/\delta_{L_-}^{n^2}(H)<n$.
\end{enumerate}
\end{proposition}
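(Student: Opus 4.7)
The plan is to realize Proposition~\ref{pMKfil} as a direct application of Theorem~\ref{tMKfil} to $A = U(L_-)$ regarded as an $\FF G$-module algebra generated by the $\FF G$-submodule $V = L_-$. Since $L_-$ is $G$-invariant and generates $U(L_-)$ as a unital algebra, the generation hypothesis of Theorem~\ref{tMKfil} is met; what remains is to identify the associated graded algebra and to check that Kochetov's argument goes through in that setting.

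The first step is to compute $\gr A$. Because $[L_-, L_-] = 0$ and $\beta(t,t) = -1$ for every $t \in T$ with $(L_-)_t \neq 0$, the color PBW Theorem (\cite[Chapter~3, Theorem~2.2]{BMPZ}) identifies $U(L_-)$ with the color Grassmann algebra $\Lambda^\beta(L_-)$, whose defining relations $xy = \beta(t,u)\,yx$ (for homogeneous $x \in (L_-)_t$, $y \in (L_-)_u$) are homogeneous of degree~$2$ in the generators. Hence the degree filtration $\{A_m\}$ determined by $V = L_-$ coincides with the natural grading, and $\gr A \cong \Lambda^\beta(L_-)$.

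Strictly speaking this falls outside the three models $S(V)$, $\Lambda(V)$, $S(V)/\mathrm{ideal}(V^{[p]})$ appearing in the statement of Theorem~\ref{tMKfil}, so the route I would take is the one flagged in the excerpt: reread Kochetov's proof and observe that it invokes those three models only through the existence of an ordered PBW-type basis of $A$ obtained from a basis of $V$, combined with the multilinearity of (\ref{ee1}). The color PBW Theorem supplies precisely such an ordered basis of $\Lambda^\beta(L_-)$, indexed by strictly increasing finite sequences in a fixed ordered homogeneous basis of $L_-$, and the only new feature is that reorderings introduce nonzero scalar factors $\beta(t,u)$ in place of the usual signs. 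Since these scalars are units in $\FF$, they play the same structural role as $\pm 1$ in the ordinary Grassmann case and cannot force Kochetov's nondegeneracy arguments to collapse.

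The main obstacle, and the only step that requires genuine verification rather than citation, is keeping track of these $\beta$-twists through the multilinearization of (\ref{ee1}) and through the codimension estimates leading to bounds on $\delta_H^{n^2}(V)$ and $\delta_V^{n^2}(H)$. Once this bookkeeping is checked, the two conclusions $\dim L_-/\delta_H^{n^2}(L_-) < n$ and $\dim H/\delta_{L_-}^{n^2}(H) < n$ follow verbatim from Kochetov's argument applied to $A = U(L_-)$, $V = L_-$, and $H = \FF G$.
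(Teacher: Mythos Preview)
Your proposal is correct and matches the paper's own argument essentially line for line: the paper also applies Theorem~\ref{tMKfil} with $A=U(L_-)$ and $V=L_-$, notes that the color Grassmann algebra $\Lambda^\beta(L_-)$ is not literally among the three listed models, and resolves this by observing that Kochetov's proof uses only the existence of a PBW basis, which the color PBW Theorem supplies. Your additional remarks about tracking the nonzero $\beta$-scalars through the multilinearization are exactly the bookkeeping the paper summarizes with the phrase ``without any changes the proof applies.''
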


Note that in the case $H=\FF G$, the first inequality enables us to obtain a $G$-invariant subspace $M$ of codimension at most $n$ in $L_-$, which is contained in $\delta_{\FF G}^{4^n n^2}(L_-)$. Likewise, the second inequality enables us to obtain an ideal $I$ of codimension at most $n$ which is contained in  $\delta_{L_-}^{4^n n^2}(\FF G)$. If we apply P. M. Neumann's Theorem~\ref{tPMN} to the bilinear map $I\times M\to I\ast M$, given by $(p,w)\mapsto p\ast w$, for $p\in I$, $w\in M$, we obtain
\begin{equation}\label{ee2}
\dim (I\ast M)\le 16^nn^4.
\end{equation} 

Some additional work is needed to obtain the subgroup $G_1=\delta_{L_-}(G)$ of finite index in $G$ and a finite-dimensional $G$-invariant subspace $N$ such that the action of $G_1$ on $M/N$ is trivial. In the case of ordinary superalgebras, this is done in \cite[Proposition 2.13]{MK}. The ``color'' version of this proposition, adjusted to our needs, is as follows.

\begin{proposition}\label{p2.13}
Let $G$ be an abelian group acting on $A=\Lambda^\beta(V)$, where $\beta$ is an alternating bicharacter on a finite abelian group $T$, $V$ a $T$-graded vector space. If the identity of the action of degree $n$ (\ref{ee1}) holds  for any $h_1,\ld,h_n\in\FF G$ and any $X_1,\ld,X_n\in A$ then $[G:\delta_V(G)]<d$ where $d$ depends only on $n$.
\end{proposition}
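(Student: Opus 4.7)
The plan is to adapt the proof of \cite[Proposition 2.13]{MK} from the ordinary Grassmann algebra to the color Grassmann algebra $\Lambda^\beta(V)$; the bicharacter $\beta:T\times T\to\FF^\times$ contributes only scalar factors that can be absorbed into the coefficients $\gamma_\pi$ of the identity (\ref{ee1}).

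The first move is to apply Proposition \ref{pMKfil} to $H=\FF G$ with $V=L_-$. The hypothesis gives $\dim(\FF G)/\delta^{n^2}_{L_-}(\FF G)<n$. By the additive property (4) combined with Lemma \ref{lDIM}, the $\FF$-span of $\delta^{n^2}_{L_-}(\FF G)$ is contained in $\delta^{4^n n^2}_{L_-}(\FF G)$, and by property (5) this span is a two-sided ideal of $\FF G$. Therefore the ideal $J:=\delta_{L_-}(\FF G)$ has codimension less than $n$ in $\FF G$, and the quotient $\bar H:=\FF G/J$ is a commutative $\FF$-algebra of dimension at most $n-1$. The canonical homomorphism $G\to\bar H^\times$ has kernel equal to $\delta_{L_-}(G)$, so $[G:\delta_{L_-}(G)]=|\bar G|$ where $\bar G$ denotes the image of $G$ in $\bar H^\times$.

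The decisive step is to bound $|\bar G|$ by a function of $n$. Here one follows Kochetov's argument: inserting $\FF$-linearly independent test vectors $X_1,\ldots,X_n\in L_-$ of generic grading into the identity (\ref{ee1}) and exploiting the alternating (color) multilinear structure of $\Lambda^\beta(L_-)$, one extracts Vandermonde-type relations on the scalars through which cosets of $\delta_{L_-}(G)$ act modulo $J$. Decomposing $\bar H$ over a splitting field into its at most $n-1$ local components, these relations force the image of $G$ in each local factor to be finite with order bounded by a function of $n$, so that $|\bar G|\le d(n)$ for some $d=d(n)$ depending only on $n$.

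The main obstacle is this last step, which constitutes the core of Kochetov's proof in the ordinary case. For the color case, the key observation is that the values of $\beta$ appear only in the rearrangement scalars when permuting homogeneous factors in $\Lambda^\beta(L_-)$; these are nonzero constants and can be absorbed into the coefficients $\gamma_\pi$ without altering the structure of the argument. Thus the whole calculation carries over mutatis mutandis, producing the desired bound $d$ as a function of $n$ alone.
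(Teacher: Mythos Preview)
Your sketch diverges from the paper's route at the decisive point and leaves a genuine gap there. The paper does not try to bound $|\bar G|$ directly from a quotient $\FF G/J$; it follows Kochetov's original architecture, which first treats the \emph{cyclic} case. Lemmas~2.9--2.11 and Proposition~2.12 of \cite{MK} analyse the action of a single element $g\in G$ on the Grassmann algebra and produce a constant $c=c(n)$ such that $\dim\bigl((g^{c}-1)\ast V\bigr)<c$ for \emph{every} $g\in G$; equivalently, $G/\delta_V(G)$ has exponent bounded by $c$. Only after this uniform exponent bound is in hand does the finite-codimension information combine with it to give the index bound $d(n)$. Your proposal skips this entirely and tries to pass straight from ``$\dim\bar H<n$'' to ``$|\bar G|\le d(n)$''. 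But a commutative $\FF$-algebra of dimension $<n$ over a field of characteristic~$0$ can have a unit group with arbitrarily large, even infinite, subgroups (already $\bar H=\FF$ does), so nothing you have written rules this out. The phrase ``Vandermonde-type relations'' is precisely where the hard work of Lemmas~2.9--2.12 lives, and gesturing at it is not a proof.

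Your color adaptation is also less sharp than the paper's. You cannot absorb the rearrangement scalars $\beta(t,u)$ into the $\gamma_\pi$: the $\gamma_\pi$ are fixed once and for all by the identity, while the $\beta$-factors depend on the $T$-degrees of the particular $X_i$ substituted. The paper's point is more specific: in the calculations of Lemmas~2.9--2.12 one may take the test vectors $T$-homogeneous of the \emph{same} degree $t\in T_-$, and then $yx=\beta(t,t)\,xy=-xy$, so the computations are literally identical to the ordinary Grassmann case. That is the mechanism by which the color structure disappears, and it is what you should invoke rather than an absorption argument.
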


The proof of Proposition 2.13 in \cite{MK} is based on Lemmas 2.9, 2.10, 2.11 and Proposition 2.12, describing the restrictions on the $T$-graded action of a cyclic group $(g)$ on $A=\Lambda^\beta(V)$ if the identity of the action is satisfied. The proofs of the lemmas remain unchanged if all the elements of $V$ on which $g$ acts are assumed $T$-homogeneous. If two elements $x,y$ present in the calculations are of the same degree $t\in T$ then $yx=\beta(t,t)xy=-xy$ and so the calculations work in the same way as in the case of the ordinary $\Lambda(V)$. The final conclusion of a preliminary Proposition~2.12 in \cite{MK} word by word translates to the color situation and provides us with the number $c$, depending only on $n$, such that, for any $g\in G$, $\dim((g^c-1)\ast V)<c$. 

Finally, the derivation of Proposition 2.13, hence of our Proposition \ref{p2.13}, translates to the color case without problems. One only needs to keep in mind that the spaces, algebras and the actions appearing in the proofs in \cite{MK} are $T$-graded.

Let us denote by $G_1$ the subgroup $\delta_{L_-}(G)$, which is of finite index thanks to Proposition \ref{p2.13}. Remember the ideal $I$ appearing in (\ref{ee2}). If we denote by $(\FF G)^+$ the augmentation ideal of $\FF G$, then 
\[
\dim(\FF G_1I\cap (\FF G_1)^+\le \dim(\FF G)/I\le n.
\]
We can then choose $g_1,\ld,g_n\in G_1$ such that
\[
(\FF G_1)^+=\Span(g_1-1,\ld,g_n-1)+I\cap(\FF G_1)^+.
\]
It then follows that
\[
(\FF G_1) \ast M\subset (g_1-1)\ast M+\cdots+(g_n-1)\ast V+I\ast M.
\]
By construction, the right hand side of the latter expression is finite-dimensional.

The completion of the proof of the classification theorem for ordinary Lie superalgebras \cite[Section 3]{MK} does not depend on the color structure and can be used as the completetion of the proof of our Theorem \ref{tLSABW}. \hfill$\Box$


\begin{thebibliography}{999}


\bibitem{AS} Andruskiewitsch, N.; Schneider, H.-J., \textit{Lifting of quantum linear spaces and pointed Hopf
algebras of order $p^3$}, J. Algebra \textbf{209}, (1998), 658--691.

\bibitem{AS2} Andruskiewitsch, N.; Schneider, H.-J., \textit{On the classification of finite-dimensional pointed Hopf algebras}, Ann. of Math. (2) \textbf{171} (2010), no. 1, 375--417.

\bibitem{AGI} Angiono, I.; Garcia Iglesias, A., Pointed Hopf algebras: a guided tour to the liftings. Rev. Colomb. Math., 53(2019), 1-- 44.

\bibitem{BA} Bahturin, Y., \textit{Identities in the universal envelopes of Lie algebras}, J. Australian Math. Soc., {\bf 2745} (1979), 10 -- 21.

\bibitem{B} Bahturin, J. A. Lectures on Lie algebras. Lectures given at Humboldt University, Berlin and Lomonosov University, Moscow. Studien zur Algebra und ihre Anwendungen [Studies in Algebra and its Applications] \textbf{4}, Akademie-Verlag, Berlin, 1978, viii+126 pp. 
\bibitem{BMPZ} Bahturin,Y.; Mikhalev, A. A.; Petrogradsky, V.M.; Zaicev, M.V., Infinite-dimensional Lie superalgebras, De Gruyter Expositions in Mathematics 7, Walter De Gruyter, Berlin, 1992, 250 pp.
\bibitem{BP} Bahturin,Y.; Petrogradsky, V.M., \textit{Polynomial identities in smash products}, J. Lie Theory, \textbf{12}(2002) 369 -- 395.

\bibitem{BePa2}  Bergen, J.; Passman, D.S., {\it Delta methods in enveloping rings. II}, J. Algebra {\bf 156} (1993),  494--534.

\bibitem{GZ} Giambruno, A.; Zaicev, M.V., Polynomial Identities and Asymptotic Methods, Mathematical Surveys and Monographs, \textbf{122}(2005), 
American Mathematical Society.

\bibitem{HLS} Handelman, D.; Lawrence, J.; Schelter, W., \textit{Skew group rings}, Houston J. Math., 4(1978, 175 --197.
\bibitem{IP}Isaacs, I. M.; Passman, D. S.,
\textit{Groups with representations of bounded degree},
Canadian J. Math. \textbf{16} (1964), 299--309.



\bibitem{MKC} Kochetov, Mikhail, \textit{On identities for coalgebras and Hopf algebras}, Comm. Algebra, \textbf{28}(2003), 1211--1221.

\bibitem{MK} Kochetov, Mikhail, \textit{Identities of the smash product of the universal enveloping algebra of a Lie superalgebra and a group algebra}, Mat. Sb., \textbf{194}(2003), 87--102.

\bibitem{MKP} Kochetov, Mikhail, \textit{PI Hopf algebras of prime characteristic},
J. Algebra, \textbf{262}(2003), 77--98.

\bibitem{MKH} Kochetov, Mikhail,\textit{Polynomial identities in Hopf algebras: Passman's theorem and its dual}, Groups, rings, Lie and Hopf algebras (St. John's, NF, 2001), 105???120, Math. Appl., 555, Kluwer Acad. Publ., Dordrecht, 2003.



\bibitem{L} Laty\v{s}ev, V. N.,
\textit{Two remarks on PI-algebras} (Russian),
Sibirsk. Mat. \v{Z}. \textbf{4}(1963), 1120--1121.

\bibitem{IK}Kaplansky, I., Rings with a polynomial identity, Bull. Amer. Math. Soc., \textbf{54}(1948), 496--500.

\bibitem{SM} Montgomery, S. Hopf algebras and their actions on rings, Amer. Math. Soc. Reg. Conf. Ser. Math., \textbf{82} (1993), 238 pp.



\bibitem{PMN}Neumann, P. M.,  \textit{An improved bound for BFC-groups}, J. Austral. Mat. Soc., 11(1970), 19--27.
\bibitem{PL} Passman, D.S., 
\textit{Linear identities in group rings, I.}, Pacific J.\ Math.\ \textbf{36}(1971), 457--483. 
\bibitem{P} Passman, D.S., 
\textit{Group rings satisfying a polynomial identity},
J. Algebra \textbf{20} (1972), 103--117.
\bibitem{PE} Passman, D.S., 
\textit{Enveloping algebras satisfying a polynomial identity},
J. Algebra \textbf{134} (1990), 469--490.
\bibitem{PV} Petrogradsky, V. M., 
\textit{Existence of identities in the restricted enveloping algebra},
Math. Notes, \textbf{49} (1991), 60--66.


\bibitem{ECP} Posner, E.C., \textit{Prime rings satisfying a polynomial identity}, Proc. Amer. Math. Soc., \textbf{11}(1960), 180--183.

\bibitem{Rad} David E. Radford, \textit{Finite-dimensional simple-pointed Hopf algebras}, J. Algebra 211 (1999) 686--710.
\bibitem{AR} Regev, Amitai, \textit{Existence of polynomials identities 
in $A\ot_F B$}, Bull. Amer. Math. Soc. \textbf{77}(1971), 1067--1069. 


\bibitem{S} Smith, M., \textit{On group algebras}, Bull. Amer. Math. Soc., \textbf{76} (1970), 78--782. 


\bibitem{W} Wiegold, J., \textit{Groups with boundedly finite classes of conjugate elements}, Proc. Roy. Soc. London, Ser. A., \textbf{238} (1957), 389--401. 

\end{thebibliography}
\end{document}